\newcommand{\Pb}{\mathbb{P}}
\newcommand{\E}{\mathbb{E}}
\newcommand\pfun{\mathrel{\ooalign{\hfil$\mapstochar\mkern5mu$\hfil\cr$\to$\cr}}}
\newtheorem{letterthm}{Theorem}
\newtheorem{theorem}{Theorem}
\newtheorem{lemma}[theorem]{Lemma}
\newtheorem{proposition}[theorem]{Proposition}
\theoremstyle{definition}
\newtheorem{definition}[theorem]{Definition}
\theoremstyle{remark}
\newtheorem{example}[theorem]{Example}
\numberwithin{theorem}{section}
\begin{document}
\title{Separability and Randomness in Free Groups}
\author{Michal Buran \footnote{Supported by ERC starting grant no. 714034 SMART and ERC no. 850956}}
\date{}
\maketitle

\begin{abstract}
We prove new separability results about free groups. Namely, if $H_1, \ldots , H_k$ are infinite index, finitely generated subgroups of a non-abelian free group $F$, then there exists a homomorphism onto some alternating group $f:F \twoheadrightarrow A_m$ such that whenever $H_i$ is not conjugate into $H_j$, then $f(H_i)$ is not conjugate into $f(H_j)$. 

The proof is probabilistic. We count the expected number of fixed points of $f(H_i)$'s and their subgroups under a carefully constructed measure.
\end{abstract}

\section{Introduction}
Let's say that we want to understand a typical homomorphism between two groups. The simplest domain would be a free group because then the map is specified by its values on generators. The correspondence between the maps and the tuples is bijective, so studying maps from free groups is the same as studying tuples of elements. To make this interesting, we need a family of groups, ideally one which is easy to describe and to work with. In this paper, we take the symmetric groups.

We prove the following theorem and develop a technique to get a range of similar results.
\begin{letterthm}\label{LetterTheoremForSection3}
Suppose $H_1, \ldots , H_k$ are infinite index, finitely generated subgroups of a non-abelian free group $F$. Then there exists a surjective homomorphism $f:F \longrightarrow A_m$ such that whenever $H_i$ is not conjugate into $H_j$, then $f(H_i)$ is not conjugate into $f(H_j)$. Moreover, there exists a surjective homomorphism $f':F \longrightarrow S_{m'}$ with the same property.
\end{letterthm}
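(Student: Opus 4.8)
The plan is to recast the entire statement in the language of Stallings core graphs and then run a first–moment (expected number of fixed points) argument under a measure tailored to the given subgroups. Recall that a homomorphism $f\colon F\to S_m$ is the same datum as an action of $F$ on $[m]$, equivalently a degree-$m$ covering $\mathcal{G}$ of the rose $R_n$ with $\pi_1(R_n)=F$. For a finitely generated $H\le F$ with based Stallings graph $\Gamma_H$, a point $p\in[m]$ is a common fixed point of $f(H)$ \emph{precisely} when the unique label-respecting lift of $\Gamma_H$ into $\mathcal{G}$ based at $p$ exists, i.e.\ when every loop of $\Gamma_H$ closes up in $\mathcal{G}$. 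I would first record the standard dictionary: $H_i$ is conjugate into $H_j$ iff there is a label-preserving graph morphism $\mathrm{Core}(\Gamma_{H_i})\to\mathrm{Core}(\Gamma_{H_j})$. Since $H_i$ is not conjugate into $H_j$, no such morphism exists, and this combinatorial \emph{witness} is what must be converted into a statement about $A_m$. Note also that it suffices to rule out conjugacy in $S_m$, the more permissive condition, since non-conjugacy in $S_m$ implies non-conjugacy in $A_m$.

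The mechanism for transferring non-conjugacy is that common-fixed-point counts are conjugation invariants. If $\pi f(H_i)\pi^{-1}\subseteq f(H_j)$ then $\mathrm{fix}(f(H_j))\le\mathrm{fix}(f(H_i))$, and more strongly the collection of cycle types occurring in $f(H_i)$ is contained in that of $f(H_j)$; applying this to every subgroup $K\le H_i$ yields a whole family of numerical constraints $\mathrm{fix}(f(K))$. Because a single fixed-point count is monotone and cannot by itself break the symmetry of mutual non-conjugacy, I would work instead with the full \emph{profile} $K\mapsto\mathrm{fix}(f(K))$ ranging over the finitely many relevant subgroups, so that the absent graph morphism is forced to produce a strict discrepancy in at least one coordinate.

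This is where the carefully constructed measure enters. Rather than sampling $f(x_1),\dots,f(x_n)$ uniformly, under which higher-rank subgroups almost never acquire common fixed points, I would build $\mathcal{G}$ by planting a controlled number of embedded copies of each based graph $\Gamma_{H_i}$; this is possible exactly because infinite index means $\Gamma_{H_i}$ is not a full cover and has vertices of deficient degree whose remaining edges can be completed elsewhere, by a uniform random matching. Each planted copy contributes a guaranteed common fixed point of $f(H_i)$, so the expectation $\E[\mathrm{fix}(f(K))]$ of each profile coordinate splits as a planted main term plus a small accidental term. I would compute these expectations and arrange the planting, using the witness above, so that for every bad pair $(i,j)$ the expected profile of $f(H_i)$ is incompatible with conjugacy into $f(H_j)$.

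To pass from expectations to an actual $f$, I would bound the accidental (unplanted) copies by a second-moment or direct union-bound estimate and conclude that with positive probability every profile lands in its intended window simultaneously across the finitely many pairs. Surjectivity onto $A_m$ I would secure by taking the planted and random permutations to be even and invoking a Dixon-type statement that random even permutations generate $A_m$ with probability tending to $1$, so separation and surjectivity hold together; the $S_{m'}$ version then follows either by repeating the argument while sampling in $S_{m'}$, or by transporting an $A_m$-solution through a standard fixed-point-preserving embedding. The hard part will be this third paragraph: designing one measure whose first moments separate \emph{all} pairs in the correct direction at once, and controlling the accidental copies tightly enough that the planted discrepancy survives, since the naive uniform model kills the fixed-point counts and a single count cannot distinguish mutually non-conjugate subgroups.
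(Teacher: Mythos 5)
Your proposal follows the same overall architecture as the paper's proof---random completions of a planted union of core graphs, fixed points counted as lifts of core graphs, first- and second-moment estimates, and a Dixon-type theorem (the paper's Theorem \ref{MainTheorem}) to land in $A_m$ or $S_{m'}$---but it has a genuine gap at exactly the step you defer to your third paragraph, and the gap is not just that the measure is hard to design: no measure can work with the profile you propose. The profile $K\mapsto\mathrm{fix}(f(K))$, with $K$ ranging over a pre-chosen finite set of subgroups of the $H_i$, is not a conjugation-covariant invariant. Concretely, if $\pi f(H_i)\pi^{-1}\subseteq f(H_j)$, then for $K\le H_i$ all you may conclude is $\pi f(K)\pi^{-1}\subseteq f(H_j)$, whence $\mathrm{fix}(f(K))\ge\mathrm{fix}(f(H_j))$. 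The unknown conjugator $\pi$ carries $f(K)$ into \emph{some} subgroup of $f(H_j)$, but not into $f(K')$ for any designated subgroup $K'\le H_j$ on your list, and you have no control over which subgroup it lands in. Hence every constraint in your ``family'' is subsumed by the single monotone inequality $\mathrm{fix}(f(H_i))\ge\mathrm{fix}(f(H_j))$ that you yourself correctly dismissed as unable to break the symmetry of a mutually non-conjugate pair, so the ``strict discrepancy in at least one coordinate'' cannot be forced by any planting.

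What is missing is \emph{canonicity} of the profile coordinates, and supplying it is the real content of the paper's proof of Theorem \ref{MainTheoremOfTheChapter3}. The paper takes $G_{i,j}$ to be the intersection of \emph{all} index-$p_j$ subgroups of $H_i$, for distinct primes $p_1,\dots,p_n$ with huge gaps ($p_j>p_k^{(k!)^{\mathrm{rk}(H_k)}}\,|V(\operatorname{Core}(H_k))|$ for $j<k$), after reordering so that $H_i\prec H_j$ implies $i\le j$. Because this construction is intrinsic, it transfers along inclusions: if $A\subseteq B$ then each index-$p$ subgroup of $B$ meets $A$ in index $1$ or $p$, so the intersection of all index-$p$ subgroups of $A$ lies in that of $B$; combined with the identification of $f(G_{i,k})$ as the corresponding canonical subgroup of $f(H_i)$, this gives exactly the transfer your profile lacks: $f(H_i)\prec f(H_j)$ forces $f(G_{i,k})\prec f(G_{j,k})$ and hence $\mathrm{fix}(f(G_{i,j}))\ge\mathrm{fix}(f(G_{j,j}))$. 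Accordingly the planted graphs are the cores of the $G_{i,i}$, not of the $H_i$ as you propose, and a second ingredient you omit is needed to control accidental contributions: one must show that a label-preserving morphism $\operatorname{Core}(G_{i,j})\to\operatorname{Core}(G_{k,k})$ can exist only if $H_i\prec H_k$, which the paper deduces from Lemmas \ref{FiniteIndex} and \ref{SubgroupLifting} together with the prime gaps; only then do the rapidly growing multiplicities $a_j$ certify your ``windows''. Two smaller errors: conditioning the planted and random permutations to be even is incompatible with the uniform-matching measure (parity is a property of the completed permutation, not of a partial injection)---the paper instead intersects the high-probability separation event with the event, of limiting probability $2^{-r}>0$ by Theorem \ref{MainTheorem}, that the image is exactly $A_m$; and composing an $A_m$-solution with an embedding $A_m\hookrightarrow S_{m'}$ is not surjective, so the $S_{m'}$ clause must come from rerunning the argument on the $S$-branch of Theorem \ref{MainTheorem}.
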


%
%

This theorem gives an alternating version of \emph{conjugacy separability (SCS)} and \emph{subgroup into-conjugacy separability (SICS)} for free groups. These properties allow one respectively to preserve non-conjugacy of two subgroups under passing to a finite quotient and to preserve the property of one subgroup not being conjugate into another. These properties were conceived of by Bogopolski-Grunewald \cite{bogopolski2010subgroup}. In the same paper they proved these properties for free groups and free products of finite groups \cite{bogopolski2010subgroup}. Later Bogopolski-Bux proved these properties for surface groups and related them to curvature-type properties \cite{bogopolski2014subgroup}. Chagas-Zalesskii proved that free-by-finite groups are SCS and property SCS is preserved under free products \cite{chagas2015subgroup} and that limit groups are SCS \cite{chagas2016limit}.

Our theorem generalizes SICS for infinite subgroups to the setting of alternating or symmetric quotient. Indeed if $H_1$ is not conjugate into $H_2$, we can make $f(H_1)$ not conjugate into $f(H_2)$, where $f$ is onto an alternating (resp. symmetric) group. It also generalizes SCS. Indeed the relation `is conjugate into' is antisymmetric on the conjugacy classes of finitely generated subgroups of a free group \cite[Lemma 2.1]{bogopolski2010subgroup} and if finitely generated infinite index subgroups $H_1, H_2 <F$ are not conjugate, then without loss of generality $H_1$ is not conjugate into $H_2$ and we can make $f(H_1)$ not conjugate into $f(H_2)$ under some alternating (resp. symmetric) quotient and then $f(H_1)$ and $f(H_2)$ are not conjugate.
%
%

Once we build the probabilistic machinery, we can also easily reprove the main theorem from \cite{wilton2012alternating}, which is the alternating analogue of subgroup separability in the free group. See section \ref{Wilton's theorem}.

As the size of a symmetric group increases, the probability that two random permutations generate the entire symmetric group or its index $2$ alternating subgroup tends to $1$. The proof of this fact is much harder than the trivial task of finding a pair of permutations, which generate the symmetric group. However, an explicit construction of quotients becomes trickier, more tedious or potentially unknown as the complexity of desired properties increases. The probabilistic method bypasses this by focusing on statistics of outcome rather than the individual cases.

Random actions of groups have been extensively studied before. For example, Liebeck-Shalev studied random quotients of Fuchsian groups \cite{liebeck2004fuchsian}, walks in finite groups of Lie type \cite{liebeck2005character}, spans of elements of fixed order in finite simple groups \cite{liebeck2002random}. However, our techniques are most similar to how Puder-Parzanchevski counted fixed points of a subgroup of a free group under a random permutation action \cite{puder2015measure}.
Probabilistic methods had been used before to prove that for every infinite class $\mathcal{C}$ of simple groups, every non-abelian free group is residually $\mathcal{C}$ \cite[Theorem 3]{dixon2003residual}. 

People had also asked before in various settings: ``What is a typical quotient?" One can take two random elements \cite{dixon1969probability} or even one restricted element and the other at random \cite{babai1989probability}. The results of this paper enable us to impose restrictions on both (or all) generators simultaneously.

In Section \ref{SectionSetUp} we say what we mean by these restrictions and state that under mild conditions most of quotients satisfying these restrictions are alternating or symmetric. The next few sections are devoted to the proof of this statement.
 In Section \ref{Transitivity}, we prove that the random group is transitive. In Section \ref{Primitivity} we prove that it is also primitive. In Section \ref{Jordan} we prove that the random group contains a short cycle and that this together with primitivity proves the theorem.
 
Section \ref{Wilton's theorem} illustrates how to use the results from Section \ref{SectionSetUp} by quickly reproving a known theorem.

In Section \ref{Applications}, we apply the theorem to show new separability properties of free groups. In particular, infinite index, finitely generated non-conjugate subgroups of a free group map to non-conjugate subgroups of an alternating group under some surjective homomorphism onto an alternating group.

\section{Set-up} \label{SectionSetUp}
A generalisation of the following theorem allows us to show that certain groups are alternating with a large probability.
\begin{theorem}\label{DixonTheorem} \cite{dixon1969probability}
An image of a random homomorphism $F_2 \longrightarrow S_n$ is $A_n$, resp. $S_n$, with probabilities which tend to $1/4$, resp. $3/4$, as $n$ goes to infinity.
\end{theorem}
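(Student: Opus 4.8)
The plan is to identify a random homomorphism $F_2 \to S_n$ with a uniformly random pair $(\sigma,\tau) \in S_n \times S_n$, since such a map is determined by the images of the two free generators; its image is then the subgroup $G = \langle \sigma, \tau\rangle$. Because $A_n$ and $S_n$ are the only subgroups of $S_n$ containing $A_n$, it suffices to establish two things: first, that $\Pr[A_n \le G] \to 1$ as $n \to \infty$, and second, a parity argument separating $A_n$ from $S_n$.

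For the first claim I would proceed through the classical hierarchy transitive $\Rightarrow$ primitive $\Rightarrow$ contains $A_n$, with the first two steps handled by first-moment estimates. For transitivity, observe that $G$ is intransitive exactly when $\sigma$ and $\tau$ share a common invariant proper nonempty subset. The probability that a fixed $k$-subset is setwise invariant under a random permutation is $1/\binom{n}{k}$, so a union bound gives $\Pr[G \text{ intransitive}] \le \sum_{k=1}^{n-1} \binom{n}{k}\big/\binom{n}{k}^2 = \sum_{k=1}^{n-1}\binom{n}{k}^{-1} = O(1/n)$. An analogous computation, summing over divisors $d \mid n$ with $1 < d < n$ and over the partitions of $\{1,\dots,n\}$ into blocks of size $d$, bounds the probability that $G$ is transitive but imprimitive and shows it too tends to $0$.

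The crux is passing from primitivity to containing $A_n$, and this is the step I expect to be the main obstacle, since pure counting no longer suffices and genuine group-theoretic input is required. Two routes are available. One is Dixon's original counting argument: every primitive subgroup of $S_n$ not containing $A_n$ has order at most Bochert's bound $n!/\lfloor (n+1)/2\rfloor!$, and the number of such subgroups is controllably small, so $\sum_{G}|G|^2$ over all proper primitive $G$ is $o((n!)^2)$ and the probability of generating one vanishes. The alternative, closer in spirit to the machinery developed later in this paper, is to combine primitivity with the presence of a short cycle: one shows that with probability tending to $1$ the group $G$ contains a cycle of prime length $p \le n-3$ (obtained, for instance, as a suitable power of $\sigma$) and then invokes Jordan's theorem, by which a primitive permutation group of degree $n$ containing such a cycle must contain $A_n$.

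Finally, granting $\Pr[A_n \le G] \to 1$, the parity refinement is elementary. We have $G = A_n$ precisely when both $\sigma$ and $\tau$ are even and $A_n \le G$; since $\Pr[\sigma,\tau \text{ both even}] = 1/4$ for $n \ge 2$ and the event $A_n \le G$ fails with vanishing probability, the events differ by an $o(1)$ set and $\Pr[G = A_n] \to 1/4$. Complementarily $\Pr[G = S_n] = \Pr[A_n \le G] - \Pr[G = A_n] \to 3/4$, as claimed.
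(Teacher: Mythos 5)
Your proposal is correct and is essentially the approach this paper takes: the paper does not reprove Dixon's theorem but cites it, and its Sections \ref{Transitivity}--\ref{Jordan} generalize exactly your argument---transitivity and primitivity by union/first-moment bounds, a prime cycle of length at most $n-3$ plus Jordan's theorem to force $A_n$, and the parity of the two generators to split the limit into $1/4$ versus $3/4$. One small correction of attribution: the Jordan-cycle route \emph{is} Dixon's original 1969 argument (his Lemma 3 on $T_n$ is restated as Lemma \ref{primeEstimate} here), whereas the Bochert-bound counting you label as Dixon's is the later alternative, and as you sketch it (``the number of such subgroups is controllably small'') it would still require a nontrivial bound on the number of primitive (maximal) subgroups of $S_n$, which is far from a routine union bound.
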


We generalise this result to the setting with finitely many  conditions on the generators $a_1, \dots a_k$ of $F_k$. These conditions are given by an immersion of a finite graph into a rose via a correspondence which we now discuss.
The basic idea is to start with a graph, which extends to a covering of the presentation complex. We then look at all the ways it extends to a covering.

\begin{definition}[Schreier graphs]
If $G$ is a group and $\mathcal{H} = \{ H_i: i \in I \}$ a family of subgroups of $G$ and $\mathcal{A}$ is a collection of elements of $G$, then \emph{the Schreier graph associated to the subgroups $\mathcal{H}$ and elements $\mathcal{A}$} consists of:
\begin{itemize}
\item A vertex for every coset $gH_i$ for every $H_i \in \mathcal{H}$.
\item An edge labeled $a$ from the vertex corresponding to $gH_i$ to the vertex corresponding to $a gH_i$ for all $a \in \mathcal{A}$ and all cosets of $H_i$.
\end{itemize}
\end{definition}

In the usual definition of the Schreier graph, there is just one subgroup, but we want to allow for disconnected graphs. We will often omit to specify $\mathcal{H}$ and $\mathcal{A}$, when they're clear from the context.

If $a_1, \ldots , a_k$ is a $k$-tuple of elements in $S_n$, we can construct a graph encoding their action on $\{1, \ldots, n\}$ as follows. Take $n$ vertices labelled $1, \ldots , n$ with $i$ and $a_j(i)$ connected by an oriented edge labelled $a_j$ for all $i$ and $j$. This graph is a (not necessarily connected) covering of \emph{the rose of $k$ petals $R_k$}, a graph which has a single vertex and $k$ edges labelled $a_1, \ldots , a_k$ respectively. The covering has degree $n$. 

Choose an arbitrary basepoint $v$ in each component and take $H_v$ to be the group consisting of labels of closed paths starting at $v$. Then the above graph is in fact the Schreier graph of $F_k = \langle a_1, \ldots, a_k \rangle$ associated to subgroups $\{H_v : v \text{ is a basepoint } \}$ and elements $\{a_1, \ldots, a_k \}$.

This gives a bijective correspondence between the degree $n$ coverings of $R_k$ and $k$-tuples of elements of $S_n$. To see the other direction, we need the following definition.

\begin{definition}[Core graph] \label{CoreDefinition}
Given a graph $Y$, \emph{the core of $Y$}, denoted $\operatorname{Core}(Y)$ is the subgraph of $Y$, which consists of all vertices and edges that are contained in some cycle.

Given $\Gamma$  a subgroup of a free group $F_k = \langle a_1, \ldots, a_k \rangle$, let $X_{\Gamma}$ be the Schreier graph associated to the subgroup $\Gamma$ and elements $\langle a_1, \ldots, a_k \rangle$.   \emph{The core of $\Gamma$}, denoted $\operatorname{Core}(\Gamma)$ is the graph $\operatorname{Core}(X_\Gamma)$.
\end{definition}
Note that $X_{\Gamma}$ is a cover of $R_k$.

We will also need to talk about subspaces of covers without explicitly referring to the fact that it's a subspace of a cover.

We will in general look at the coverings where the vertices are not labelled. This means that in fact we'll be using the correspondence between unlabelled degree $n$ coverings and conjugacy classes of $k$-tuples of elements of $S_n$. We can use this correspondence to define conditions on a random homomorphism from a finitely generated free group to the symmetric group $S_n$ as follows.


\begin{definition}[Precover, random action]
Suppose that $G$ is a labeled oriented graph and $G \longrightarrow R_k$ sends vertices to vertices and edges to the edges of the same label. Such a map is called a \emph{precover} of $R_k$. Just as a degree $n$ cover corresponds to a permutation $f: [n] \longrightarrow [n]$, a degree $n$ precover corresponds to a partial injective function $f: [n] \pfun [n]$.

Suppose $G$ has at most $n$ vertices. Add vertices to $G$ until there are $n$ vertices in total: let $G'$ be disjoint union of $G$ and a discrete graph with $n - |G|$ vertices.

Let $V_j^{no}(G')$ be the set of vertices of $G'$ without an outgoing edge labelled $a_j$ and $V_j^{ni}(G')$ be the set of the vertices without an incoming edge labelled $a_j$. For all $j$, choose a bijection $f_j$ between $V_j^{no}(G')$ and $V_j^{ni}(G')$ uniformly at random. Connect $v$ and $f_j(v)$ by an oriented edge labelled $a_j$.

The resulting graph $\overline{G}$ is \emph{a random degree $n$ completion} of $G$, the associated homomorphism $\varphi:F_k \longrightarrow S_n$ is \emph{a random homomorphism with condition $G$} and the associated group $\Gamma_n(G) \leqslant S_n$ is \emph{a random group with condition $G$}. Let's call $G$ a \emph{condition graph}.

\end{definition}

We frequently take the condition graph to be a core graph, union of core graphs or some slightly larger superspace of a core graph. A core graph of a finitely generated group is a finite graph, since it is a union of only finitely many cycles. Recall from Theorem \ref{DixonTheorem} that $\Gamma_n(\emptyset)$ is frequently $S_n$ or $A_n$.
If some component of a graph $G$ is an actual covering of $R_2$, then $\Gamma_n(G)$ is non-transitive for $n > |G|$. We prove a converse result:
\begin{theorem}[Main Theorem]\label{MainTheorem}
If no component of $G$ is a covering of $R_k$, then $\Gamma_n(G)$ is $S_n$ or $A_n$ with probabilities which tend to $1-2^{-k}$ or $2^{-k}$ respectively as $n$ goes to infinity.
\end{theorem}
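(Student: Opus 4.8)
The plan is to follow the classical route to identifying a permutation group as containing $A_n$: establish transitivity, then primitivity, then the presence of a short cycle, and finish with a theorem of Jordan. Concretely, I would first reduce the statement to two claims: (i) with probability tending to $1$, the group $\Gamma_n(G)$ contains $A_n$, so that $\Gamma_n(G)$ is either $A_n$ or $S_n$; and (ii) conditioned on (i), the probability that $\Gamma_n(G) = A_n$, i.e. that every generator $\varphi(a_j)$ is an even permutation, tends to $2^{-k}$. Since $1 - 2^{-k}$ and $2^{-k}$ sum to $1$, these two claims together give the theorem.

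For claim (i) I would carry out the three standard steps in order, which correspond to Sections \ref{Transitivity}, \ref{Primitivity}, and \ref{Jordan}. First, transitivity: via a first-moment (expected-count) argument, I would bound the probability that the random completion $\overline{G}$ has a proper nonempty $\varphi(F_k)$-invariant subset of $[n]$ and show it tends to $0$. This is exactly where the hypothesis that no component of $G$ is a covering of $R_k$ enters: such a component would already be invariant by construction, whereas otherwise every invariant set must ``cross'' the random matchings $f_j$, and the expected number of such sets can be controlled by summing over their possible sizes. Second, primitivity: assuming transitivity, I would rule out each nontrivial block system by an analogous first-moment estimate, bounding the expected number of partitions of $[n]$ preserved by the random matchings. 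Third, a short cycle: using the fixed-point counting technique in the style of Puder-Parzanchevski, I would count the expected number of elements of $\Gamma_n(G)$ that are cycles of a fixed small prime length $p$ moving only $p$ of the $n$ points, and show that with high probability at least one such element exists. A primitive group containing a $p$-cycle fixing at least three points then contains $A_n$ by Jordan's theorem.

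For claim (ii) I would analyze the sign of each $\varphi(a_j)$ separately. The permutation $\varphi(a_j)$ is assembled from the fixed partial data recorded in $G$ together with the uniformly random bijection $f_j$ between $V_j^{no}(G')$ and $V_j^{ni}(G')$. Since all but $O(1)$ of the $n$ vertices are isolated in $G$, both of these sets have size tending to infinity, so $f_j$ is a uniformly random bijection of a large set; its parity, and hence the sign of $\varphi(a_j)$, is asymptotically uniform on $\{\pm 1\}$. As the $f_j$ are chosen independently across $j$, the signs of $\varphi(a_1), \ldots, \varphi(a_k)$ are asymptotically independent and uniform, so the probability that all of them are even tends to $2^{-k}$. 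Combined with claim (i), this yields the two limiting probabilities.

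The step I expect to be the main obstacle is the short-cycle count in claim (i), and more generally setting up all the fixed-point computations correctly under the conditioned measure induced by $G$. Unlike the unconstrained case of Theorem \ref{DixonTheorem}, the random completion is a uniform matching only on the unconstrained vertices, so the law of $\Gamma_n(G)$ is \emph{not} the uniform measure on $k$-tuples, and the expected fixed-point counts of group elements and their subgroups must be evaluated against this carefully constructed measure. Obtaining bounds sharp enough to make the first moments for invariant sets and block systems vanish, while simultaneously forcing the expected number of short cycles to grow, is the technical heart of the argument; the concluding appeal to Jordan's theorem is then immediate.
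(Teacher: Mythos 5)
Your overall architecture is the same as the paper's: transitivity, then primitivity, then a short cycle plus Jordan's theorem, with the $A_n$ versus $S_n$ split coming from the signs of the random matchings $f_j$ (which are in fact \emph{exactly} uniform and independent once each $V_j^{ni}(G')$ has at least two elements, since post-composing a matching with a transposition flips the sign of $\varphi(a_j)$; your claim (ii) is fine, and is actually spelled out more explicitly than in the paper). Your first-moment treatment of transitivity is a legitimate alternative to the paper's argument, which instead re-wires completions of $G$ into completions of a discrete graph and quotes Dixon's unconditional lemma. For primitivity the paper uses a cheaper device than a direct first moment over block systems: conditioning on the event of respecting $G$ inflates any probability by at most the polynomial factor $(2n)^{|E(G)|}$, which cannot overcome Dixon's bound $n2^{-n/4}$ on the proportion of imprimitive tuples; your direct estimate could be made to work, but this inequality is the painless way to get it.

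The genuine gap is the short-cycle step. You propose to produce, with high probability, a cycle of a \emph{fixed} small prime length $p$ by an expected-count argument. This fails. The only tractable source of such elements is powers of images of fixed words (counting over all elements of $\Gamma_n(G)$ is circular, and the Puder--Parzanchevski machinery counts lifts of fixed finite data), and for a power of a permutation $\sigma$ to be a single $p$-cycle, $\sigma$ must have exactly one cycle of length exactly $p$ and no other cycle of length divisible by $p$: a cycle of length $rp$ with $r>1$ only powers to a product of $r$ disjoint $p$-cycles, never to one. For an (approximately) uniform $\sigma \in S_n$ the number of cycles of length divisible by $p$ is roughly Poisson with mean about $(\log n)/p$, which tends to infinity for fixed $p$, so the probability that any given generator admits such a power tends to $0$; a fortiori the relevant expected counts vanish rather than grow. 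This is exactly why Dixon, and the paper in Section \ref{Jordan}, let the prime grow with $n$: one works with $T_n = \bigcup_q C_{q,n}$ over all primes $(\log n)^2 \leq q \leq n-3$, whose density is at least $1 - 4/(3\log\log n)$ by Lemma \ref{primeEstimate}, and then a suitable power of $\varphi(a_1)$ -- with both the exponent and the prime $q$ depending on the random cycle structure -- is a $q$-cycle; Jordan's theorem only needs $q \leq n-3$, not $q$ bounded. The conditional version then has two additional points your outline would still need: the $a_1$-cycles already present in $G$ are handled by taking $n$ so large that $q$ exceeds all their lengths, and the $a_1$-paths are handled by showing a random completion of a discrete graph is almost always also a completion of the union of paths. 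Finally, note that even where a first moment does grow, ``expected count grows'' does not by itself yield ``exists with high probability''; one needs a second moment, whereas the union-over-primes route avoids this issue entirely.
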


\section{Transitivity} \label{Transitivity}
We need to show that a random group is either $S_n$ or $A_n$. Both $A_n$ and $S_n$ are transitive, so the transitivity is necessary. It also turns out to be one of the conditions used in the converse statement.

\begin{lemma} [\cite{dixon1969probability}]\label{Dixon Theorem}
The group $\Gamma_n(\emptyset)$ is almost always transitive.
(i.e. the probability that $\Gamma_n(\emptyset)$ generates a transitive subgroup of $S_n$ tends to $1$ as $n$ goes to infinity).
\end{lemma}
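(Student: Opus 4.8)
The plan is to run a first-moment (union bound) argument on the number of invariant subsets. First I would unwind the definitions: for the empty condition graph $G = \emptyset$, the random completion adds $n$ isolated vertices and, for each label $a_j$, assigns an independent uniformly random bijection of these $n$ vertices. Hence $\Gamma_n(\emptyset)$ is exactly the subgroup of $S_n$ generated by $k$ independent, uniformly random permutations $a_1, \ldots, a_k$, where $k \geq 2$ since $F$ is non-abelian. Next I would reduce transitivity to the non-existence of invariant sets: the orbits of $\langle a_1, \ldots, a_k\rangle$ partition $[n]$, so the action is transitive if and only if there is no subset $S$ with $\emptyset \neq S \subsetneq [n]$ that is setwise stabilised by every $a_j$. (Invariance under each $a_j$ automatically gives invariance under $a_j^{-1}$, so invariance under the generators is equivalent to invariance under the whole group.)

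Then I would estimate the expected number of such invariant sets. For a fixed $S$ with $|S| = m$, the permutations stabilising $S$ setwise are those permuting $S$ and its complement separately, of which there are $m!(n-m)!$; so a single uniform permutation stabilises $S$ with probability $1/\binom{n}{m}$, and by independence all $k$ of them do so with probability $\binom{n}{m}^{-k}$. Writing $N$ for the number of proper nonempty invariant sets and summing over all $\binom{n}{m}$ subsets of each size $m$,
\[
\E[N] = \sum_{m=1}^{n-1} \binom{n}{m}\binom{n}{m}^{-k} = \sum_{m=1}^{n-1}\binom{n}{m}^{\,1-k},
\]
and Markov's inequality yields $\Pr[\Gamma_n(\emptyset)\text{ not transitive}] = \Pr[N \geq 1] \leq \E[N]$.

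Finally I would show this sum tends to $0$. The endpoint terms $m=1$ and $m=n-1$ each contribute $n^{1-k} \leq n^{-1}$. For the remaining terms, since $1-k \leq -1$ and $\binom{n}{m} \geq \binom{n}{2}$ whenever $2 \leq m \leq n-2$, each such term is at most $\binom{n}{2}^{-1}$, so the fewer than $n$ of them contribute at most $n\binom{n}{2}^{-1} = O(n^{-1})$ in total. Hence $\E[N] = O(n^{-1}) \to 0$, which proves the lemma.

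The argument is essentially routine; the only point requiring care is the bookkeeping in the last step, where one must verify that although roughly $n$ subset-sizes feed into the sum, the rapid growth of $\binom{n}{m}$ together with the negative exponent $1-k$ makes all but the two endpoint terms negligible. The crucial structural input is that $k \geq 2$, which is precisely what makes the exponent negative and the series summable; for $k=1$ a single random permutation is almost never an $n$-cycle, so the statement genuinely fails and this hypothesis cannot be dropped.
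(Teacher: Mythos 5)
Your proposal is correct, but a comparison with ``the paper's proof'' is slightly moot here: the paper does not prove this lemma at all --- it imports it as a black-box citation to Dixon's 1969 paper, just as it later imports Dixon's imprimitivity bound in Section~\ref{Primitivity}. So what you have produced is a self-contained replacement for the citation. Your route (identify $\Gamma_n(\emptyset)$ with the group generated by $k$ independent uniform permutations, reduce non-transitivity to the existence of a proper nonempty invariant set, bound the expected number of invariant $m$-sets by $\binom{n}{m}\cdot\binom{n}{m}^{-k}=\binom{n}{m}^{1-k}$, and sum) is in fact essentially the computation Dixon himself uses for the transitivity step, so nothing exotic is happening; but it has two small advantages worth noting. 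First, it treats all $k\geq 2$ uniformly, whereas quoting Dixon verbatim gives only $k=2$ and one must add the (easy) remark that a group containing a transitive subgroup is transitive. Second, it gives the quantitative rate $O(1/n)$ for free, which is stronger than the qualitative ``tends to $1$'' that the paper actually needs. Your bookkeeping is sound: the endpoint terms $m=1,n-1$ give $2n^{1-k}\leq 2/n$, and the unimodality bound $\binom{n}{m}\geq\binom{n}{2}$ for $2\leq m\leq n-2$ makes the remaining at most $n$ terms contribute $O(1/n)$; Markov's inequality finishes it. You are also right that $k\geq 2$ is exactly where the argument (and the statement) would break, since for $k=1$ transitivity means being an $n$-cycle, which has probability $1/n$.
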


If a component of $G$ is an actual covering, then no completion is transitive (except for the case when the component is all of $G$ and there are no other vertices). That component remains a component in any completion. We need to exclude this situation in the generalised version of the theorem.

\begin{lemma}
Assume that no component of a graph $G$ is a covering of $R_k$.
Then the group $\Gamma_n(G)$ is almost always transitive and a random completion $\overline{G}$ is almost always connected.
\end{lemma}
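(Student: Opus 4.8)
The plan is to bound the first moment of the number of proper, nonempty invariant subsets of the completion $\overline{G}$ and to show it tends to $0$. Since $\Gamma_n(G)$ acts transitively on $[n]$ exactly when the Schreier graph $\overline{G}$ is connected, and $\overline{G}$ is disconnected exactly when some proper nonempty $S \subsetneq [n]$ is a union of connected components (equivalently, $S$ satisfies $f_j(S) = S$ for every $j$), the two assertions of the lemma coincide, and both follow once $\E[\#\{\text{proper nonempty invariant } S\}] \to 0$, by Markov's inequality.

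First I would describe the candidate invariant sets. Write $M = n - |G|$ for the number of added isolated vertices, and for each generator $a_j$ set $N_j = |V_j^{no}(G')| = |V_j^{ni}(G')|$, so that $N_j \geq M$ and $f_j$ is a uniform bijection among $N_j!$ possibilities. Any invariant $S$ must meet $V(G)$ in a union of components of $G$ (the existing edges persist in $\overline{G}$) together with an arbitrary set of $a$ of the isolated vertices. For such an $S$, put $p_j = |V_j^{no}(G') \cap S|$ and $q_j = |V_j^{ni}(G') \cap S|$; since each isolated vertex lies in both sets and each component $C$ satisfies $d_j^{\mathrm{out}}(C) = d_j^{\mathrm{in}}(C)=:d_j(C)$, one gets $p_j = q_j = d_j + a$, where $d_j = \sum_{C \subseteq S} d_j(C)$ counts the $a_j$-half-edges of the chosen components. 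As $f_1, \dots, f_k$ are independent and a single $f_j$ fixes $S$ with probability $p_j!(N_j - p_j)!/N_j! = \binom{N_j}{p_j}^{-1}$, we obtain $\Pb[S \text{ invariant}] = \prod_{j} \binom{N_j}{d_j + a}^{-1}$.

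Next I would invoke the hypothesis. Because no component of $G$ is a covering of $R_k$, every component $C$ has a vertex missing some labelled half-edge, so $\sum_j d_j(C) \geq 1$; hence whenever $S$ contains at least one component of $G$ we have $\sum_j d_j \geq 1$. Using the $S \leftrightarrow S^c$ symmetry to restrict to $|S| \leq n/2$, and noting that there are at most $2^{|G|}$ choices of the component-part (a constant) and $\binom{M}{a}$ choices of the isolated part, I would split the first moment into the purely-isolated contribution and the contribution of sets containing a component, estimating each with elementary binomial inequalities and using $k \geq 2$. When $S$ is purely isolated, $d_j = 0$, and bounding $\binom{N_j}{a} \geq \binom{M}{a}$ for two generators gives a contribution at most $\sum_{a \geq 1} \binom{M}{a}^{-1} = O(1/M) \to 0$. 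When $S$ contains a component, some $d_{j_0} \geq 1$ forces $\binom{N_{j_0}}{d_{j_0} + a} \geq \binom{M}{a+1}$, and the same comparison for a second generator yields a per-choice contribution at most $\sum_{a \geq 0} \binom{M}{a+1}^{-1} = O(1/M)$; multiplying by the constant $2^{|G|}$ keeps this $\to 0$.

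Summing these bounds shows $\E[\#\{\text{proper nonempty invariant } S\}] \to 0$, so $\Gamma_n(G)$ is transitive and $\overline{G}$ is connected asymptotically almost surely. I expect the main obstacle to be the bookkeeping that converts the qualitative hypothesis ``no component is a covering'' into the quantitative statement $\sum_j d_j \geq 1$ for every component-containing $S$, and then verifying that this single unit of defect suffices to beat the entropy $\binom{M}{a}$ of choosing the isolated vertices. The endpoint regimes ($a$ near $0$ and $a$ near $M/2$) and the mild discrepancy between $M$, the $N_j$, and the cutoff $n/2$ require a little care, although there the binomial reciprocals are so small that crude bounds suffice.
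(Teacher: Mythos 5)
Your proposal is correct, but it takes a genuinely different route from the paper's. The paper computes nothing directly: it couples random completions of $G$ with random completions of a discrete graph $H$ (each component $G_i$ is replaced by a discrete graph whose vertex count equals the number of missing $a_j$-edges of $G_i$, and a bijection between the two sets of ``missing edges'' transports a completion of $H$ to a completion of $G$), shows that connectedness transfers across this coupling precisely because no component of $G$ is a covering, and then quotes Dixon's unconditioned transitivity lemma as a black box; when the labels occur unequally often in some component, it first pads $G$ with leaf edges (its Case 2) and argues a random completion of $G$ is almost always a completion of the padded graph. You instead prove the statement from scratch by a first-moment argument: exact invariance probabilities $\prod_j \binom{N_j}{d_j+a}^{-1}$, the hypothesis entering as $\sum_j d_j \ge 1$ for any candidate set containing a component, and a union bound making the expected number of invariant sets $O(1/n)$. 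What each buys: your route is self-contained (it re-proves, rather than invokes, Dixon's transitivity lemma, in the conditional setting), needs no case split for unbalanced labels, and gives an explicit rate; the paper's route avoids all binomial estimates and builds coupling machinery that it reuses elsewhere (the same ``almost every completion of $X$ is a completion of $X'$'' trick reappears in its Case 2 and in the `Jordan' section). Two details you deferred do need the care you promise, and both check out as you set things up: the bound $\binom{N_{j_0}}{d_{j_0}+a}\ge\binom{M}{a+1}$ does not follow from $N_{j_0}\ge M$ alone, since binomials decrease past the midpoint --- you need $N_{j_0}-(d_{j_0}+a)\ge M-a$ (true, because $d_{j_0}\le N_{j_0}-M$) together with monotonicity of $\binom{x+y}{x}$ in both $x$ and $y$; and the sums $\sum_a\binom{M}{a}^{-1}$ and $\sum_a\binom{M}{a+1}^{-1}$ are $O(1/M)$ only because the restriction $|S|\le n/2$ keeps $a$ bounded away from $M$, so the near-trivial binomials at the top of the range never enter.
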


The idea of the proof is as follows. We're starting from something which intuitively is more connected than a discrete graph. We formalise this intuition by constructing a probability preserving map between random completions of $\emptyset$ and random completions of $G$, which preserves connectedness. We will do this by replacing components of $G$ with discrete graphs.

\begin{proof}

Let $G_1, G_2, \dots, G_l$ be the connected components of $G$. Let $E_j (G_i)$ be the set of edges labelled $a_j$ in $G_i$.
\begin{itemize}
\item Case 1:
The number of edges $|E_j(G_i)|$ labelled $a_j$ in $G_i$ is the same for all $j$. Let $H_i$ be the discrete graph with $|V(G_i)| - |E(G_i)|/l$ vertices. Let $H$ be the union of all $H_i$. Pick a bijection between the "missing edges" at vertices of $H_i$ and the "missing edges" at vertices of $G_i$ - see figure \ref{transitivityFigureExample}. This induces a map between random completions. More formally, recall that if $G$ is a graph, then $V_j^{ni}(G)$ and $V_j^{no}(G)$ are the vertices with no incoming and no outgoing edge labelled $a_j$, respectively. The label $a_j$ appears the same number of times in $G_i$ for all $j$, so  
$$|V_j^{ni}(G_i)| = |V_j^{no}(G_i)| = |V(G_i)|- |E_j(G_i)|=|V(G_i)| - |E(G_i)|/l$$
 is independent of $j$, where $E_j(G_i)$ are the edges of $G_i$ with label $a_j$. The graph $H_i$ is discrete, so we have $|V_j^{ni}(H_i)|=|V(G_i)| - |E(G_i)|/l$. Pick arbitrary bijections $f_{i,j}^{ni}:V_j^{ni}(H_i) \longrightarrow V_j^{ni}(G_i) $. Let $f$ be a union of these bijections. These maps induce a bijection between the degree $n$ completions of $H$ and degree $n+ |E_j(G_i)|$ completions of $G$ as follows. Given a completion $\overline{H}$ of $H$, consider $(\overline{H} \setminus H) \cup G$. Now connect each open end of an edge in $(\overline{H} \setminus H)$, which was previously attached to $v \in H$ to $f(v)$. This is a completion of $G$. Call it $f(\overline{H})$ by abuse of notation. This correspondence is bijective as now we could excise $G$ and connect the open ends back to $H$. 
 
Suppose $f(\overline{H}) = K_1 \sqcup K_2$, where $K_i$ is closed non-empty. For all $v \in H$, the component of $G$ containing $f_{i,j}^{no}(v)$ and $f_{i,j}^{ni}(v)$ does not depend on $j$. Hence, the closures of $K_i \setminus (G \cap K_i)$ in $\overline{H}$ are two disjoint closed subsets partitioning $\overline{H}$. They are non-empty as long as $K \not\subset G$. This is where we use that no component of $G$ is a cover. If $\overline{H}$ is connected, then so is $f(\overline{H})$.
  
The probability that a random completion of $H$ is connected (hence the associated group is transitive) tends to $1$ by Lemma \ref{Dixon Theorem} and therefore the probability that a random completion of $G$ is connected also tends to $1$.

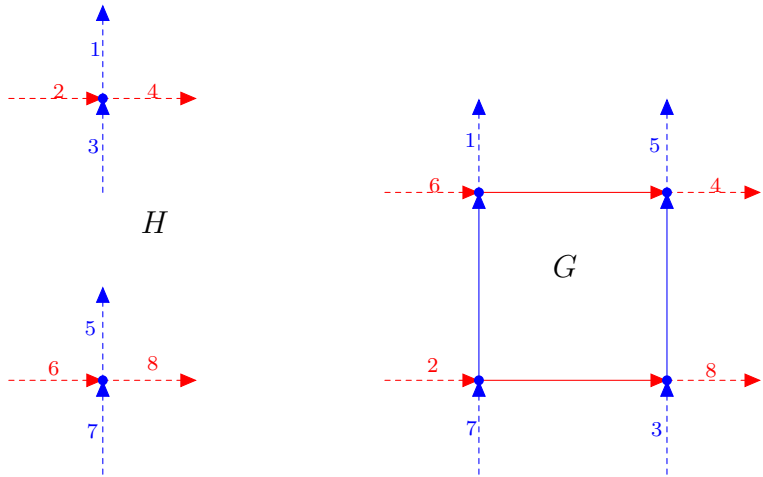
\begin{figure}
\centering
\definecolor{ffqqqq}{rgb}{1,0,0}
\definecolor{qqqqff}{rgb}{0,0,1}
\begin{tikzpicture}[line cap=round,line join=round,>=triangle 45,x=1.0cm,y=1.0cm, scale=1.25]
\clip(-1.29,-2.67) rectangle (7.57,3.34);
\draw [->,dash pattern=on 2pt off 2pt,color=ffqqqq] (0,-1) -- (1,-1);
\draw [->,dash pattern=on 2pt off 2pt,color=ffqqqq] (-1,-1) -- (0,-1);
\draw [->,dash pattern=on 2pt off 2pt,color=qqqqff] (0,-2) -- (0,-1);
\draw [->,dash pattern=on 2pt off 2pt,color=qqqqff] (0,-1) -- (0,0);
\draw [->,dash pattern=on 2pt off 2pt,color=ffqqqq] (0,2) -- (1,2);
\draw [->,dash pattern=on 2pt off 2pt,color=ffqqqq] (-1,2) -- (0,2);
\draw [->,dash pattern=on 2pt off 2pt,color=qqqqff] (0,1) -- (0,2);
\draw [->,dash pattern=on 2pt off 2pt,color=qqqqff] (0,2) -- (0,3);
\draw [->,dash pattern=on 2pt off 2pt,color=ffqqqq] (3,-1) -- (4,-1);
\draw [->,dash pattern=on 2pt off 2pt,color=qqqqff] (4,-2) -- (4,-1);
\draw [->,dash pattern=on 2pt off 2pt,color=qqqqff] (4,1) -- (4,2);
\draw [->,dash pattern=on 2pt off 2pt,color=ffqqqq] (3,1) -- (4,1);
\draw [->,dash pattern=on 2pt off 2pt,color=ffqqqq] (6,1) -- (7,1);
\draw [->,dash pattern=on 2pt off 2pt,color=qqqqff] (6,1) -- (6,2);
\draw [->,dash pattern=on 2pt off 2pt,color=ffqqqq] (6,-1) -- (7,-1);
\draw [->,dash pattern=on 2pt off 2pt,color=qqqqff] (6,-2) -- (6,-1);
\draw [->,color=qqqqff] (4,-1) -- (4,1);
\draw [->,color=ffqqqq] (4,-1) -- (6,-1);
\draw [->,color=qqqqff] (6,-1) -- (6,1);
\draw [->,color=ffqqqq] (4,1) -- (6,1);
\draw (0.29,0.91) node[anchor=north west] {$H$};
\draw (4.67,0.45) node[anchor=north west] {$G$};
\begin{scriptsize}
\fill [color=qqqqff] (0,-1) circle (1.5pt);
\draw[color=ffqqqq] (0.53,-0.82) node {8};
\draw[color=ffqqqq] (-0.52,-0.87) node {6};
\draw[color=qqqqff] (-0.11,-1.54) node {7};
\draw[color=qqqqff] (-0.13,-0.45) node {5};
\fill [color=qqqqff] (0,2) circle (1.5pt);
\draw[color=ffqqqq] (0.53,2.08) node {4};
\draw[color=ffqqqq] (-0.47,2.08) node {2};
\draw[color=qqqqff] (-0.1,1.49) node {3};
\draw[color=qqqqff] (-0.08,2.52) node {1};
\fill [color=qqqqff] (4,-1) circle (1.5pt);
\fill [color=qqqqff] (6,-1) circle (1.5pt);
\fill [color=qqqqff] (4,1) circle (1.5pt);
\fill [color=qqqqff] (6,1) circle (1.5pt);
\draw[color=ffqqqq] (3.51,-0.84) node {2};
\draw[color=qqqqff] (3.92,-1.51) node {7};
\draw[color=qqqqff] (3.91,1.56) node {1};
\draw[color=ffqqqq] (3.53,1.08) node {6};
\draw[color=ffqqqq] (6.52,1.08) node {4};
\draw[color=qqqqff] (5.87,1.5) node {5};
\draw[color=ffqqqq] (6.47,-0.89) node {8};
\draw[color=qqqqff] (5.89,-1.52) node {3};
\end{scriptsize}
\end{tikzpicture}
\caption{The graph $G$ is the core of $\langle [a,b] \rangle$ and $H$ consists of two vertices. Pick a bijection between the missing edges at $H$ and the missing edges at $G$. A completion of $H$ corresponds to a completion of $G$ by reconnecting the adjacent edges according to this bijection. If the completion of $H$ is connected, then so is the completion of $G$.}\label{transitivityFigureExample}
\end{figure}

\item
Case 2:
Suppose $|E_j(G_i)|$ is not independent of $j$. We can reduce this situation to case 1, by taking a slightly larger graph $G'$, which satisfies this condition. The key observation will be that most completions of $G$ are also completions of $G'$. 

If there is some $i, j$ and $j'$ with $|E_j(G_i)| < |E_{j'}(G_i)|$, let $v_j$ be a vertex of $G_i$ with no outgoing edge labelled $a_j$. Replace $G_i$ by a union of $G_i$ and an $a_j$-edge starting at $v_j$ and ending at a new leaf. Repeat this process until $|E_j(G_i)|$ becomes independent of $j$.

This process terminates since $\sum_i \sum_j \left( \max_{j'}\left(|E_{j'}(G_i)|\right) - |E_{j}(G_i)|\right)$  is a non-negative integer, which decreases whenever we change the graph.
Let $G'$ be the resulting graph.

The inclusion of $G$ to $G'$ is a $\pi_1$-isomorphism on each component and $G'$ contains finitely many more edges than $G$. If $\overline{G}$ is a random completion of $G$, then there is a unique map $G' \longrightarrow \overline{G}$ extending the inclusion of $G$. If this map is injective, then $\overline{G}$ is also a completion of $G'$. Let's estimate the probability of this event. Build a random completion of $G$ in the same way, we've built $G'$: one edge at a time.

If first edge $e_1$ connects to a vertex of $G$, then the injectivity fails. There are $n-|V(G)|$ vertices not in $G$. If $e_1$ connects to one of them, we can continue with the second edge. The second edge $e_2$ can fail the injectivity in at most $|V(G)|+1$ ways (it might connect back to $G$ or to an endpoint of $e_2$). It can succeed in at least $n-|V(G)|-1$ ways. Continue for all new edges. The probability that $G' \longrightarrow \overline{G}$ is injective is at least 
$$\frac{n-|V(G)|}{n}\frac{n-|V(G)|-1}{n} \ldots \frac{n-|V(G)|-\Delta}{n}$$
where $\Delta = |E(G')|-|E(G)|$. This quantity goes to $1$ as $n$ goes to infinity. This means $G' \longrightarrow \overline{G}$ is almost always injective and a completion of $G$ is almost always a completion of $G'$. By case 1, a completion of $G'$ is almost always connected, therefore a completion of $G$ is almost always connected. We are implicitly using that the probabilities are compatible in the following sense.

\begin{dmath*}
\mathbb{P}(\text{ A completion of } G \text{ is } H | H  \text{ is a completion of } G' ) =
 \mathbb{P}(\text{ A completion of } G' \text{ is } H)
 \end{dmath*}

This is true, because it does not matter whether we complete $G$ to a completion containing $G'$, or whether we complete $G'$.

\end{itemize}

\end{proof}

\section{Primitivity} \label{Primitivity}
An action of a group $\Gamma$ on a finite set $X$ is \emph{primitive} if it is transitive and no nontrivial partition of $X$ is preserved by $\Gamma$. We have already dealt with the transitivity, so we just need to show non-existence of a preserved partition. Transitivity implies that all sets in the partition have the same size, hence taking $n$ to be a prime ensures primitivity, but we do not need to do that here.

\begin{lemma}
Assume that no component of $G$ is a covering of $R_k$.
Then $\Gamma_n(G)$ is almost always primitive.
\end{lemma}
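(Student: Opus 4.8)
The plan is to use the first moment method. The preceding lemma already gives that $\Gamma_n(G)$ is almost always transitive, so it suffices to show that the expected number of nontrivial $\Gamma_n(G)$-invariant partitions of $[n]$ tends to $0$. A transitive imprimitive action has a block system all of whose blocks share a common size $s$ with $s \mid n$ and $2 \le s \le n/2$; write $b = n/s$ for the number of blocks. (If $n$ is prime there is no such $s$ and primitivity is automatic, as already noted.) For a fixed partition $\mathcal{P}$ of $[n]$ into $b$ blocks of size $s$, I will bound $\mathbb{P}(\mathcal{P} \text{ is preserved})$ and then sum over all such $\mathcal{P}$ and all admissible $s$.

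For the probability estimate I use that, for each generator $a_j$, the permutation $\sigma_j$ produced by the random completion is a uniformly random permutation extending the partial injection $\pi_j$ determined by the $a_j$-labelled edges of $G$; there are $(n-e_j)!$ such extensions, where $e_j = |E_j(G)|$, and the choices for different $j$ are independent. Now $\mathcal{P}$ is preserved by $\sigma_j$ only if $\sigma_j$ maps blocks to blocks, and the number of block-preserving permutations of $[n]$ is $(s!)^b\, b!$. Dropping the constraint $\sigma_j \supseteq \pi_j$ from the numerator gives
\[
\mathbb{P}(\sigma_j \text{ preserves } \mathcal{P}) \le \frac{(s!)^b\, b!}{(n-e_j)!} \le \frac{(s!)^b\, b!}{n!}\, n^{e_j},
\]
and multiplying over $j = 1, \dots, k$ yields
\[
\mathbb{P}(\mathcal{P} \text{ is preserved}) \le \left(\frac{(s!)^b\, b!}{n!}\right)^{\!k} n^{|E(G)|}.
\]
Thus the condition graph $G$, being fixed and finite, costs only the bounded polynomial factor $n^{|E(G)|}$.

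There are $\tfrac{n!}{(s!)^b\, b!}$ partitions of type $(s^b)$, so summing the bound over all of them and over all admissible $s$ bounds the expected number of preserved nontrivial partitions by
\[
n^{|E(G)|} \sum_{\substack{s \mid n \\ 2 \le s \le n/2}} \left(\frac{(s!)^b\, b!}{n!}\right)^{\!k-1}.
\]
Since $k \ge 2$, the exponent $k-1 \ge 1$, so each summand is at most $\tfrac{(s!)^b b!}{n!}$, and a Stirling estimate shows this decays faster than any fixed power of $n$. The largest summand occurs at the largest proper divisor of $n$ (at most $s = n/2$), where it equals $2/\binom{n}{n/2} = O(\sqrt{n}\,2^{-n})$, while the opposite extreme $s=2$ contributes at most $(4/n)^{n/2}$; as $n$ has only $n^{o(1)}$ divisors, the whole sum is $n^{o(1)}\cdot O(\sqrt{n}\,2^{-n})$, so even after the factor $n^{|E(G)|}$ the expectation tends to $0$. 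By Markov's inequality the probability that some nontrivial partition is preserved tends to $0$, and combined with almost-sure transitivity this shows $\Gamma_n(G)$ is almost always primitive.

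The main obstacle is the tension between the enormous number of candidate partitions and the minute per-partition probability: one must verify that the combinatorial sum genuinely decays, and in particular control the worst case $s = n/2$, where there are comparatively few partitions but each is relatively likely to be preserved. Handling the condition graph is a secondary matter; the clean device is the uniform-extension bound above, which confines the influence of $G$ to the harmless factor $n^{|E(G)|}$. (Alternatively one could first pass, as in the transitivity proof, to the balanced larger graph $G'$, since almost every completion of $G$ is a completion of $G'$; but the direct first-moment estimate avoids having to track block structure through that reduction.) Note that the hypothesis that no component of $G$ is a cover of $R_k$ enters only through the transitivity lemma, which the present argument takes as given.
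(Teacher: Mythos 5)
Your proof is correct, but it takes a genuinely different route from the paper's. The paper's argument is a two-step reduction: it cites Lemma 2 of Dixon's paper, which says the proportion of pairs (and hence of $k$-tuples) of permutations generating an imprimitive subgroup is at most $n2^{-n/4}$, and then observes that a uniformly random $k$-tuple respects the condition graph $G$ with probability at least $(2n)^{-|E(G)|}$ once $n > 2|E(G)|$; since conditioning on an event of polynomially small probability inflates probabilities by at most a polynomial factor, the proportion of imprimitive completions is at most $(2n)^{|E(G)|}n2^{-n/4} \to 0$. You instead prove the exponential rarity of preserved block systems from scratch, running the first moment over partitions with the conditioning built into the per-generator bound $\mathbb{P}(\sigma_j \text{ preserves } \mathcal{P}) \le (s!)^b\, b!\, n^{e_j}/n!$ --- the same ``conditioning costs only $n^{|E(G)|}$'' mechanism as the paper's ratio trick, but applied partition by partition. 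Your version buys self-containment (it essentially inlines the proof of Dixon's Lemma 2), a sharper exponential rate, and it makes transparent the point the paper only asserts parenthetically, namely that the pair bound applies to $k$-tuples: in your setup every generator must preserve the block system, so more generators only help. The cost is that you must verify the combinatorial sum over block sizes decays, and this is the one loose spot: you assert without proof that the largest summand occurs at the largest proper divisor of $n$. That claim is true --- writing $b=n/s$, a Stirling computation shows $\log\bigl(n!/((s!)^b\,b!)\bigr) \approx (n-b)\log b + b$ is minimized over $b \in [2, n/2]$ at the endpoint $b=2$, giving the $\Theta(\sqrt{n}\,2^{-n})$ worst case you quote, with odd $n$ even smaller --- but it deserves a line of justification rather than an assertion. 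Finally, your closing remark matches the paper exactly: in both arguments the hypothesis that no component of $G$ is a cover of $R_k$ enters only through the transitivity lemma.
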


We use that imprimitive groups are extremely rare.

\begin{proof}
By Lemma 2 in \cite{dixon1969probability}, the proportion of pairs of elements of $S_n$, which generate an imprimitive subgroup is at most $n2^{-\frac{n}{4}}$ (and hence this bound also applies to $k$-tuples).

Let's count what proportion of $k$-tuples of elements of $S_n$ \emph{respects} $G$ (i.e. how many arise from a completion of $G$).

Recall that $|E_j(G)|$ is the number of edges in $G$ labelled $a_j$.

The probability that a random permutation moves vertices according to the edges labelled $a_j$ is $$\frac{1}{n(n-1) \ldots (n- |E_j(G)| +1)}\,.$$

If $n > 2 |E(G)|$, a random completion respects $G$ with probability at least $(2n)^{-|E(G)|}$. This is only polynomial in $n$. Even if all $k$-tuples generating imprimitive subgroups respected $G$, the proportion of imprimitive random completions of $G$ would be at most
$$ \frac{n2^{-\frac{n}{4}}}{(2n)^{-|E(G)|}}=(2n)^{|E(G)|}n2^{-\frac{n}{4}}$$
which goes to zero as $n$ goes to $\infty$.
\end{proof}
\section{`Jordan' condition} \label{Jordan}
The final condition (in addition to being primitive) for a subgroup to be $A_n$ or $S_n$ is that it contains a $q$-cycle for some prime $q \leq n-3$ \cite[Theorem 13.9]{wielandt2014finite}.

Following \cite{dixon1969probability}, we define $C_{q,n} \subset	S_n$ to consist of those permutations which contain a single cycle of length divisible by $q$ and all the other cycles are of lengths coprime to $q$.

In particular, if $G$ contains an element of $C_{q,n}$, then it contains a $q$-cycle. The following lemma is a key step in Dixon's theorem.

\begin{lemma}[Lemma 3 in \cite{dixon1969probability}]\label{primeEstimate}
Let $T_n = \bigcup_q C_{q,n}$, where the union is over all primes $q$ such that
$$(\log{n})^2 \leq q \leq n-3\,.$$
Then the proportion $u_n$ of elements of $S_n$ which lie in $T_n$ is at least
$$ 1 -4/(3 \log \log n)$$
for all sufficiently large $n$.
\end{lemma}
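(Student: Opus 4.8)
The plan is to bound the proportion $1-u_n$ of permutations lying \emph{outside} $T_n$ by a union bound over two types of failure, and to show that the dominant contribution comes from primes $q$ for which $\sigma$ has two or more cycles of length divisible by $q$. For a prime $q$ in the admissible range and $\sigma \in S_n$, write $D_q(\sigma)$ for the number of cycles of $\sigma$ whose length is divisible by $q$, so that $\sigma \in C_{q,n}$ is exactly the event $D_q(\sigma)=1$. If $\sigma \notin T_n$ then $D_q(\sigma)\neq 1$ for every admissible $q$, i.e.\ $D_q(\sigma)=0$ or $D_q(\sigma)\ge 2$, so
\begin{equation*}
\{\sigma \notin T_n\} \subseteq \{\forall q:\ D_q(\sigma)=0\} \cup \bigcup_q \{D_q(\sigma)\ge 2\}.
\end{equation*}
I would estimate the two terms on the right separately.

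For the first term, the event $\{\forall q:\ D_q=0\}$ says no cycle length is divisible by any prime in $[(\log n)^2, n-3]$; in particular the cycle through the point $1$ has such a length. Since the length of the cycle containing a fixed point of a uniform permutation is uniform on $\{1,\dots,n\}$, this term is at most the density of integers $m\le n$ with no prime factor in $[(\log n)^2,n-3]$. Any such $m$ is either $(\log n)^2$-smooth or a prime exceeding $n-3$, so by the standard smooth-number estimate $\Psi(n,(\log n)^2)=n^{1/2+o(1)}$ this term is $O(n^{-1/2+o(1)})$, which is $o(1/\log\log n)$ and hence negligible.

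The second term is the crux, and I would control each $\mathbb{P}(D_q\ge 2)$ by a second factorial moment. Writing $\lambda_q := \sum_{m\le n,\ q\mid m} \tfrac1m = \tfrac1q H_{\lfloor n/q\rfloor}$ for the expected number of such cycles, the classical formula for the joint factorial moments of the cycle counts of a uniform permutation (each $\mathbb{E}[C_m]=1/m$, and $\mathbb{E}[C_m C_{m'}]=1/(mm')$ once $m+m'\le n$) gives $\mathbb{E}[D_q(D_q-1)]\le \lambda_q^2$, whence $\mathbb{P}(D_q\ge 2)\le \tfrac12\lambda_q^2$ by Markov. Summing, $\sum_q \mathbb{P}(D_q\ge 2)\le \tfrac12\sum_q \lambda_q^2$. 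The cut-off $q\ge(\log n)^2$ forces $\lambda_q\le (1+\log n)/q$ uniformly, so the sum is dominated by the smallest admissible primes, and a prime-number-theorem estimate $\sum_{q\ge(\log n)^2} q^{-2}\sim \tfrac{1}{2(\log n)^2\log\log n}$ yields $\sum_q \lambda_q^2=\tfrac{1+o(1)}{2\log\log n}$. Combining with the negligible first term gives $1-u_n\le \tfrac{1+o(1)}{4\log\log n}\le \tfrac{4}{3\log\log n}$ for all large $n$.

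The main obstacle lies entirely in this second term: both the per-prime bound $\mathbb{P}(D_q\ge 2)\le \tfrac12\lambda_q^2$, which must be justified from the exact factorial-moment structure of cycle counts rather than a Poisson heuristic, and the number-theoretic evaluation of $\sum_q \lambda_q^2$, where one must verify that the threshold $q\ge(\log n)^2$ makes every $\lambda_q$ uniformly $o(1)$ (so that having two or more divisible cycles is genuinely rare) and that the resulting constant sits safely below the target. The generous constant $4/3$ leaves ample room to absorb the $o(1)$ errors and the negligible first term, so no sharp tracking of constants is required.
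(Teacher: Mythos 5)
The paper itself contains no proof of this statement: Lemma~\ref{primeEstimate} is imported verbatim as Lemma~3 of Dixon \cite{dixon1969probability} and used as a black box in Section~\ref{Jordan}, so there is no internal argument to compare yours against; the only possible comparison is with Dixon's original paper. Your proposal is a correct, self-contained proof, and in fact it establishes the stronger bound $1-u_n\le \frac{1+o(1)}{4\log\log n}$, leaving the stated constant $4/3$ far behind. The decomposition $\{\sigma\notin T_n\}\subseteq\{\forall q:\,D_q=0\}\cup\bigcup_q\{D_q\ge 2\}$ is valid (since $\sigma\in C_{q,n}$ is exactly $D_q=1$); the first event is correctly bounded via the uniform distribution of the length of the cycle through a marked point, together with the observation that an integer $m\le n$ with no prime factor in $[(\log n)^2,n-3]$ is either $(\log n)^2$-smooth or a prime in $(n-3,n]$; and the second is correctly bounded by Markov applied to the second factorial moment, $\mathbb{P}(D_q\ge 2)\le\tfrac12\E[D_q(D_q-1)]\le\tfrac12\lambda_q^2$, followed by $\lambda_q\le(1+\log n)/q$ and $\sum_{q\ge(\log n)^2}q^{-2}\sim\frac{1}{2(\log n)^2\log\log n}$, which is precisely where the $\log\log n$ enters. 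Two points of precision, neither a gap. First, your parenthetical ``$\E[C_m C_{m'}]=1/(mm')$ once $m+m'\le n$'' is true only for $m\ne m'$; on the diagonal the correct classical statement is the factorial moment $\E[C_m(C_m-1)]=1/m^2$ for $2m\le n$ (note $\E[C_m^2]=1/m^2+1/m$), and that is what your claimed inequality $\E[D_q(D_q-1)]\le\lambda_q^2$ actually uses, so it should be stated in that form. Second, because of the huge slack between your constant $\tfrac14$ and the target $\tfrac43$, both analytic inputs can be weakened: a Rankin-type estimate for smooth numbers suffices in place of the sharp bound $\Psi(n,(\log n)^2)=n^{1/2+o(1)}$, and Chebyshev bounds on $\pi(x)$ suffice in place of the prime number theorem, so the argument can be made to rest on entirely elementary facts about random permutations and primes.
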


We need to generalise this to the conditional case.
\begin{lemma}
Let $G$ be any graph. Take a random group action with condition $G$.
Almost always some power of $a_1$ acts as a $q$-cycle, where $q \leq n-3$ is a prime.
\end{lemma}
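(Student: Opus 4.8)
The plan is to isolate the single generator $a_1$ and to compare the conditional law of $\varphi(a_1)$ with the uniform law on a slightly smaller symmetric group, so that Lemma \ref{primeEstimate} applies. First I would note that the completion edges for the different labels are chosen independently, so the law of $\varphi(a_1)$ depends only on the $a_1$-labelled edges of $G$ and not on the rest of the graph: $\varphi(a_1)$ is a uniformly random permutation of $[n]$ subject to the finitely many constraints recorded by the $c := |E_1(G)|$ edges of $G$ labelled $a_1$. By the discussion preceding the lemma it suffices to show that $\varphi(a_1)$ almost always lies in $T_n = \bigcup_q C_{q,n}$, since then a suitable power of $a_1$ acts as a $q$-cycle with $q \le n-3$ prime.

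Next I would set up a probability-preserving correspondence in the spirit of Case~1 of the proof that $\Gamma_n(G)$ is almost always transitive. The $a_1$-edges of $G$ form a partial injection whose underlying shape is a disjoint union of directed paths $P_1,\dots,P_s$ (say with $e_i$ edges) and directed cycles $Z_1,\dots,Z_r$. Contracting each path $P_i$ to a single vertex turns a random completion of the $a_1$-edges into a uniformly random permutation $\pi$ of a set of size $m := n-c$, together with the fixed cycles $Z_j$. The cycle lengths transform transparently: each $Z_j$ is a cycle of $\varphi(a_1)$ of length at most $c$; every cycle of $\pi$ meeting none of the contracted vertices is a cycle of $\varphi(a_1)$ of the same length; and each of the at most $s\le c$ cycles of $\pi$ that do meet contracted vertices becomes a cycle of $\varphi(a_1)$ whose length grows by at most $c$ in total. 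Applying Lemma \ref{primeEstimate} to $\pi\in S_m$ shows that almost always $\pi\in T_m$, i.e.\ $\pi$ has a unique cycle divisible by some prime $q$, all other cycle lengths being coprime to $q$.

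The heart of the matter, and the step I expect to be the main obstacle, is transferring $\pi\in T_m$ to $\varphi(a_1)\in T_n$. The correspondence is \emph{not} close to the identity in total variation; its only saving feature is that it alters the cycle-length multiset in at most $s\le c$ places and by at most $c$ in total. Two things can go wrong: the distinguished $q$-divisible cycle of $\pi$ may be one of the lengthened cycles, so its new length is no longer divisible by $q$; or a lengthened cycle may acquire divisibility by $q$, destroying uniqueness. To control this I would first observe that almost always $\pi$ has many long cycles, since the number of cycles of length exceeding $(\log n)^4$ is concentrated near $\log\!\bigl(n/(\log n)^4\bigr)\sim\log n$, which dwarfs the constant $c$; hence almost always some long cycle avoids all $s\le c$ contracted vertices and survives unchanged in $\varphi(a_1)$. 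The genuinely delicate point is then to pick the witnessing prime $q\ge(\log n)^2$ so that it divides the length of such an unchanged long cycle while dividing none of the remaining $\varphi(a_1)$-cycle lengths. The short cycles $Z_j$ cause no trouble, as each has length at most $c<(\log n)^2\le q$; the only danger is the at most $c$ lengthened cycles.

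Because $q$ is large compared with the shifts, and because coincidental divisibility of a second cycle by a large prime is precisely the kind of event that Lemma \ref{primeEstimate} shows to be rare, I expect a good prime to exist almost always. Making this rigorous amounts to re-running Dixon's counting argument with the $O(1)$ perturbed lengths carried along, and this bookkeeping is the crux of the proof. Once such a $q$ is secured one has $\varphi(a_1)\in C_{q,n}\subseteq T_n$, which finishes the argument.
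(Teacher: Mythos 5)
Your reduction to the single generator $a_1$, the decomposition of the $a_1$-edges into paths and cycles, and the plan to invoke Lemma \ref{primeEstimate} all match the paper's strategy; the place where your proposal diverges is the correspondence with the unconditional model, and that is exactly where it breaks. Your contraction of each path to a point is indeed an exact, uniform-measure-preserving bijection, but it distorts cycle lengths, and this distortion is not a negligible event: the unique $q$-divisible cycle promised by Lemma \ref{primeEstimate} is typically a long cycle (the largest cycle of a uniform permutation has length of order $m$), so with probability bounded away from zero it passes through a contracted vertex, its length is perturbed, and divisibility by $q$ is destroyed. Consequently you cannot deduce $\mathbb{P}(\varphi(a_1)\in T_n)\to 1$ by citing $\mathbb{P}(\pi\in T_m)\to 1$ as a black box; you would need a strengthened statement of the form ``with high probability there is a prime $q\geq(\log m)^2$ dividing the length of some cycle that avoids a prescribed set of $c$ points and dividing no other (possibly perturbed) cycle length,'' and proving that means reopening Dixon's number-theoretic counting from scratch. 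You identify this yourself (``this bookkeeping is the crux of the proof'') but do not carry it out, so the proposal is missing precisely its hard step; everything you do prove (independence across labels, the bijection, the abundance of long cycles) is the easy part.

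The paper avoids this problem entirely by running the correspondence in the opposite direction. Rather than contracting the paths out of the permutation, it starts from an unconditioned uniform permutation on $n'=n-\sum_i |L_i|$ points (a random completion of the discrete graph $D_k$ whose vertices are matched with the initial vertices of the paths) and shows that with probability $1-\mathcal{O}(1/n')$ the walks traced out from these starting vertices are injective, so that the union of paths $G'$ embeds and the unconditioned completion \emph{is} a completion of $G'$. Together with the compatibility of the two measures (conditioning a completion of $D_k$ on containing $G'$ gives exactly the law of a random completion of $G'$), Lemma \ref{primeEstimate} then transfers verbatim: the permutation is literally the same one, its cycle structure is untouched, and no divisibility bookkeeping ever arises. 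The cycles $L_i$ are handled just as you handle your $Z_j$'s, by taking $n$ large enough that $q>\max_i|L_i|$. If you want to salvage your write-up, replace the contraction by this embedding argument; the rest of your outline then goes through essentially unchanged.
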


The generalisation is a bit more complicated. We separate the $a_1$-edges in the condition graph $G$ into cycles and paths. We will take $n$ very large compared to the size of the cycles. This will allow us to ignore the cycles since they will all be smaller than the prime $q$. To deal with the paths, one only needs to realise that paths are a typical behaviour. The corresponding walks in the random unconditional completion would almost always be injective, so we can apply the unconditional theorem.

\begin{proof}
We are only using one generator, so in this proof we can assume that there is only one generator.
The condition graph $G$ consists of cycles and paths because no vertex has valence greater than $2$. We will deal with both of them separately. The paths do not really cause many issues. As in the proof of transitivity, almost every completion of an empty graph will be also a completion of a union of paths. This will reduce the statement to the unconditional version. To deal with the cycles we can use the lower bound of Lemma \ref{primeEstimate} and force $q$ to be bigger than the length of all cycles. This way a suitable power of $a_1$ will fix the cycles pointwise, and act as a $q$-cycle on the remaining vertices.

The graph $G$ consists of paths $P_1, \ldots , P_k$ and cycles $L_1, \ldots , L_l$. Let $v_i$ be the initial vertex of $P_i$.
Let $G'$ be the union of all the paths $P_i$.

Let $n' = n - \sum_i |L_i|$. Let $D_k$ be a graph with $k$ vertices and no edges. Pick a bijection $f$ between the vertices of $D_k$ and $\{ v_i\}$. Consider the random degree $n'$ completion $\Gamma_{n'}(D_k)$. Then by lemma \ref{primeEstimate}
$$\mathbb{P}(a_1 \text{ acts as an element of } T_{n'}) \geq 1 - 4/(3 \log \log n')$$
for sufficiently large $n'$.

There is unique label and orientation preserving map $\overline{f}$ from $G'$ to a completion of a discrete graph, which extends $f$. If this map $\overline{f}$ is injective, then the completion of $D_k$ is also a completion of $G'$. We claim that this happens with probability $1-\mathcal{O}(1/n')$. Let's proceed by induction on the sum of length of the paths in $G'$. If there are no edges, the map $\overline{f}$ is just $f$ and therefore a bijection to its image $D_k$.

If $G'$ contains an edge, let $e$ be an edge at the end of one of the paths. Let $G''$ be $G'$ without $e$ and the terminal endpoint $t(e)$, but with the initial endpoint $i(e)$. In other words, $G''$ is the same graph as $G'$, just with one of the paths shorter by $1$. By induction $G''$ injects with probability $1-\mathcal{O}(1/n')$. Suppose $G''$ injects. Then the graph $G'$ fails to inject only if $t(e)$ is one of the vertices in $D_k$. This happens with probability $ \frac{k}{n' - |E(G'')|}$ since there are $n'- |E(G'')| - k$ vertices not in the image of $G'$. Therefore, $G'$ injects with probability $\left( 1-\mathcal{O}(\left( 1/n '\right) \right) \left( 1-  \frac{k}{n' - |E(G'')|} \right) = \left( 1-\mathcal{O}(1/n') \right)$.

A random completion of $D_k$ is almost always a random completion of $G'$. We can restate Lemma \ref{primeEstimate} as follows. A random completion of $D_k$ has almost always the property that the induced $a_1$ belongs to $T_{n'}$. But then the same applies to a random completion of $G'$, because a random completion of $D_k$ is almost always a completion of $G'$. I.e. some power of $a_1$ in the random action with condition $G'$ almost always acts as $q$-cycle, where $q$ is a prime with $(\log n)^2 \leq q \leq n-3$.

Take $n' > \exp ( \sqrt{\max |L_i|})$. A random completion of $G$ is just a union of a random completion of $G'$ and the cycles $L_i$. Therefore $a_1$ almost always acts as a union of an element from $T_{n'}$ and cycles of lengths $|L_i|$. By choice of $n'$, we have $\max |L_i| < q$. Some power of $a_1$ almost always acts as a union of a $q$-cycle and cycles shorter than $q$. Therefore, a higher power of $a_1$ almost always acts as $q$-cycle.
\end{proof}

\section{Sample application - reproving alternating version of subgroup separability of free groups} \label{Wilton's theorem}
We can now reprove the main theorem of \cite{wilton2012alternating} using probabilistic methods.

\begin{proposition}[\cite{wilton2012alternating}]
Suppose $G$ is a finitely generated infinite index subgroup of a non-abelian free group $F_k$ and that $g_1, \ldots , g_l \in F_k \setminus G$. Then there exists a surjection $f:F_k \longrightarrow A_n$ onto some alternating group such that $f(g_i) \notin f(G)$ for all $i$.
\end{proposition}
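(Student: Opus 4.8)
The plan is to reduce the condition $f(g_i) \notin f(G)$ to a statement about a point-stabiliser and then feed a suitable condition graph into the Main Theorem. The key observation is this: if $f : F_k \longrightarrow S_n$ realises $F_k$ as permutations of $[n]$ and there is a point $p \in [n]$ fixed by every element of $G$ but moved by each $g_i$, then $f(G) \leqslant \operatorname{Stab}(p)$ while $f(g_i) \notin \operatorname{Stab}(p)$, and hence $f(g_i) \notin f(G)$ for every $i$. So it suffices to manufacture an action in which $G$ stabilises a basepoint and none of the $g_i$ does, while the image is $A_n$.

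First I would construct the condition graph. Let $X_G$ be the cover of $R_k$ associated to $G$, pointed at the basepoint $v_0$ corresponding to the coset $G$, and let $Y$ be the finite subgraph of $X_G$ consisting of $\operatorname{Core}(X_G)$, the stem joining $v_0$ to the core, and, for each $i$, the finite path in $X_G$ that reads $g_i$ starting at $v_0$. This $Y$ is a finite connected immersed graph, i.e.\ a precover of $R_k$, being a subgraph of the genuine cover $X_G$. Because $G$ has infinite index, $X_G$ is infinite, so $Y \neq X_G$ and $Y$ has vertices with missing edges; in particular no component of $Y$ is a covering of $R_k$, which is exactly the hypothesis required to apply the Main Theorem.

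Next I would verify the fixed-point properties, which hold deterministically for every completion. Every $g \in G$ corresponds to a closed path at $v_0$ lying entirely inside $\operatorname{Core}(X_G)$ together with the stem, hence inside $Y$; reading $g$ from $v_0$ therefore uses only edges already present in $Y$ and returns to $v_0$, so $f(g)(v_0) = v_0$ in any completion and $f(G) \leqslant \operatorname{Stab}(v_0)$. On the other hand, since $g_i \notin G$, reading $g_i$ from $v_0$ in the cover ends at a vertex $w_i \neq v_0$, and by construction all the traversed edges already lie in $Y$, so $f(g_i)(v_0) = w_i \neq v_0$ in any completion. Taking $p = v_0$, the reduction above gives $f(g_i) \notin f(G)$ for every degree-$n$ completion of $Y$.

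Finally I would invoke the probabilistic input. By the Main Theorem, a random degree-$n$ completion $\Gamma_n(Y)$ equals $A_n$ with probability tending to $2^{-k}$, which is positive since $k \geqslant 2$. Hence for all large $n$ there exists a completion whose associated homomorphism $f$ surjects onto $A_n$, and for that same completion the deterministic fixed-point properties yield $f(g_i) \notin f(G)$ for all $i$, as required. I expect the only real subtlety to be the bookkeeping in the graph construction—checking that $Y$ stays immersed after the $g_i$-paths are attached (automatic, since $Y \subseteq X_G$) and that the core-plus-stem structure forces every element of $G$ to read a genuine loop at $v_0$—rather than the probabilistic step, which is supplied wholesale by the Main Theorem.
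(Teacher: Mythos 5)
Your proposal is correct and takes essentially the same route as the paper: both construct a finite condition graph inside $X_G$ consisting of the core together with the lifted paths reading the $g_i$ from the basepoint, invoke Theorem \ref{MainTheorem} to obtain a completion whose image is exactly $A_n$, and note that in every completion the basepoint is fixed by $f(G)$ but moved by each $f(g_i)$. Your explicit inclusion of the stem joining the basepoint to the core, and the point-stabiliser phrasing of the separation, merely spell out carefully a step the paper's wording (``union of all loops in $X_G$'') leaves implicit.
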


The technique is similar to and illustrative of the proof of theorem \ref{LetterTheoremForSection3}.
\begin{proof}
Let $X_G$ be the cover of $R_k$ associated to $G$ as before and let $x_g$ be a basepoint in $X_G$ such that the closed paths starting at $x_g$ are precisely the elements of $G$. Let $\gamma_i$ be the loop in $R_k$ representing $g_i$ and let $\tilde{\gamma_i}$ be its lift to $X_G$ starting at $x_G$. Let $Y \subset X_G$ be the union of all loops in $X_G$ and all images of $\tilde{\gamma_i}$.

The graph $Y$ consists of a single component and this component is not a covering of $R_k$ as  $X_G$ is a connected infinite degree cover and its finite (non-empty) subgraphs are not coverings. We can apply Theorem \ref{MainTheorem} which says that a random group $\Gamma_n(Y)$ with condition $Y$ is $S_n$ or $A_n$ with probabilities which tend to $1-2^{-k}$ and $2^{-k}$ respectively as $n$ goes to infinity. In particular, probability that the image of $G$ is $A_n$ is eventually positive. The endpoint of $\tilde{\gamma_i}$ isn't $x_G$, therefore $f(g_i) \notin f(G)$ in any completion of $Y$ and in particular also in those which surject onto an alternating group. 
\end{proof}

\section{Subgroup conjugacy separability and randomness} \label{Applications}
In this section we prove Theorem \ref{LetterTheoremForSection3}.
A random action often demonstrates separability properties of a free group. Since the action is often alternating, this demonstrates separability within alternating groups.

Let $g$ and $h$ be two elements of a free group, such that $g$ is not conjugate to either $h$ or $h^{-1}$. After conjugation, we may assume that $g$ is cyclically reduced and freely reduced. If a homomorphism $f: F_2 \longrightarrow S_n$ is such that $f(g)$ and $f(h)$ have different cycle structures, then $g$ and $h$ remain in different conjugacy classes in the image under $f$.

A random action with a suitable condition will give different expected numbers of fixed points of $g$ and $h$ and just a small variance. This produces actions, which keep $g$ and $h$ in different conjugacy classes.

Let $G$ be a loop labelled with $g$. In counting fixed points of $g$, we need to count how often $G$ lifts to a covering. We can categorise these lifts by their image. I.e. we can count injective lifts of possible images of $G$.


\begin{definition}[Quotient of a precover]
If $G$ is a precover of $R_k$ for some $k$ and $K$ is a graph, then a simplicial surjective locally injective map $f: G \twoheadrightarrow K$ is \emph{a quotient of a precover $G$}.
\end{definition}

Let's say we want to count the number of lifts of a graph $H$. Then the image of a lift of $H$ is some quotient of $H$. If we take the union with $G$, we get some quotient of $G \sqcup H$, where the restriction to $G$ is injective. Counting the lifts of $H$ is therefore the same as counting the injective lifts of those quotients of $G \sqcup H$, where the restriction to $G$ is injective. Let's give this quantity a notation.

\begin{definition}
Suppose $G$ and $H$ are precovers, $K$ is a quotient of the precover $G \sqcup H$ and $\overline{G}$ is a completion of $G$. By $\mu_{K \longrightarrow \overline{G}}$ we denote the number of injective maps from $K$ to $\overline{G}$ such the the composition $G \longrightarrow K \longrightarrow \overline{G}$ is the natural inclusion of $G$ to $\overline{G}$.

Let $\tau_{H \longrightarrow \overline{G}}$ be the total number of maps from $H$ to $G$.
\end{definition}

Note that if $G \longrightarrow K$ is not injective, then $G \longrightarrow K \longrightarrow \overline{G}$ cannot be an inclusion of $G$ and therefore $\mu_{K \longrightarrow \overline{G}} = 0$. Let's express $\tau$ using $\mu$.

\begin{lemma} \label{TotalToInjective}
Suppose that $G$ and $H$ are precovers and $\overline{G}$ is a completion of $G$. The total number of maps from $H$ to $\overline{G}$ is given by the following.
$$ \tau_{H \longrightarrow \overline{G}}=  \sum_{K = (G \sqcup H) / \sim} \mu_{K \longrightarrow \overline{G}}$$
The sum goes over quotients $K$ of the precover $G \sqcup H$. 
\end{lemma}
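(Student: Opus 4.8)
The plan is to exhibit a bijection between the set of all maps $H \longrightarrow \overline{G}$ and the set of pairs $(K, j)$, where $K$ ranges over quotients of the precover $G \sqcup H$ and $j$ is one of the $\mu_{K \longrightarrow \overline{G}}$ injective maps $K \longrightarrow \overline{G}$ whose precomposition with $G \longrightarrow K$ is the inclusion $G \hookrightarrow \overline{G}$. Once such a bijection is in place, counting the left-hand side by first fixing $K$ and then summing gives the claimed identity immediately.

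First I would send a map $\phi \colon H \longrightarrow \overline{G}$ to the combined map $\psi = \iota \sqcup \phi \colon G \sqcup H \longrightarrow \overline{G}$, where $\iota \colon G \hookrightarrow \overline{G}$ is the inclusion coming from the fact that $\overline{G}$ is a completion of $G$. Let $K_\phi \subseteq \overline{G}$ be the image subgraph of $\psi$, with $q_\phi \colon G \sqcup H \twoheadrightarrow K_\phi$ the corestriction of $\psi$ and $j_\phi \colon K_\phi \hookrightarrow \overline{G}$ the inclusion. The key point to check is that $q_\phi$ is genuinely a quotient of a precover, i.e. simplicial, surjective and locally injective. Surjectivity and simpliciality are clear; local injectivity follows because $\overline{G}$ is a cover of $R_k$, so the composite $G \sqcup H \longrightarrow \overline{G} \longrightarrow R_k$ is the (locally injective) precover map, which forces $\psi$ to be locally injective, and local injectivity passes to the corestriction onto the image. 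Thus $(K_\phi, j_\phi)$ is a valid pair, and by construction $j_\phi$ is counted by $\mu_{K_\phi \longrightarrow \overline{G}}$, since $G \longrightarrow K_\phi \longrightarrow \overline{G}$ is exactly $\iota$.

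For the inverse I would take a pair $(K, j)$ with quotient map $q \colon G \sqcup H \twoheadrightarrow K$ and recover $\phi = (j \circ q)|_H$. To see the two constructions are mutually inverse, note that in one direction $j_\phi \circ q_\phi = \psi$ restricts on $H$ to $\phi$, so $\phi$ is recovered. In the other direction, the condition that $G \longrightarrow K \longrightarrow \overline{G}$ be the inclusion forces $j \circ q$ to agree with $\iota$ on $G$ and with $(j \circ q)|_H$ on $H$, hence $j \circ q = \iota \sqcup \bigl((j \circ q)|_H\bigr) = \psi$; since $j$ is injective we have $\ker(j \circ q) = \ker q$, so the image factorisation of $\psi$ reproduces $K$ together with $j$ up to the canonical identification.

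Finally I would organise the count. Each $\phi$ determines the equivalence relation $\ker \psi$ on $G \sqcup H$, hence a well-defined isomorphism class of quotient $K = (G \sqcup H)/\!\sim$; grouping the maps $\phi$ by this class partitions them exactly according to the index of the sum, and within each class the bijection identifies the maps with the injective lifts counted by $\mu_{K \longrightarrow \overline{G}}$ (in particular those classes for which $G \longrightarrow K$ fails to be injective contribute nothing, consistently with $\mu_{K \longrightarrow \overline{G}} = 0$). Summing over the classes yields $\tau_{H \longrightarrow \overline{G}} = \sum_{K = (G \sqcup H)/\sim} \mu_{K \longrightarrow \overline{G}}$. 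The one point demanding genuine care is the verification that the image factorisation $q_\phi$ is locally injective, so that it qualifies as a quotient of a precover; everything else is bookkeeping about the equivalence relation $\ker \psi$ and the injectivity of $j$.
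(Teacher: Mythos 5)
Your proof is correct and takes essentially the same approach as the paper: both identify a map $\phi \colon H \longrightarrow \overline{G}$ with the pair consisting of the image quotient $K = G \cup \phi(H)$ together with its inclusion into $\overline{G}$, and invert by restricting $K \longrightarrow \overline{G}$ to $H$. The paper's version is simply terser, leaving implicit the checks you spell out (local injectivity of the corestriction and the identification of quotients up to isomorphism).
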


\begin{proof}
Given a map $f:H \longrightarrow \overline{G}$, let $K = G \cup f(H)$. Then $K$ injects to $\overline{G}$ and $G \longrightarrow K \longrightarrow \overline{G}$ is an isomorphism onto $G \subset \overline{G}$.

Conversely, if $K$ is a quotient of $G \sqcup H$ and it injects to $\overline{G}$ and $G \longrightarrow K \longrightarrow G$ is an isomorphism, let $f$ be the map $H \longrightarrow K \longrightarrow G$.
\end{proof}

We will now need to estimate each summand in the previous lemma. If $G$ was empty then the first order estimate would be $n^{\chi(K)}$ \cite[Theorem 1.8]{puder2015measure}. To take potentially non-empty $G$ into account, we define the relative Euler characteristic be a difference of the Euler characteristics.

\begin{definition}[Relative Euler Characteristic]
If $K$ is a quotient of $G \sqcup H$ such that $G$ embeds to $K$, then \emph{the Euler characteristic of $K$ relative to $G$} is $\chi_G (K) = \chi(K) - \chi(G)$.
\end{definition}

The next lemma gives the expected number of lifts of a quotient of $G \sqcup H$. This quantity makes intuitive sense, since the relative Euler characteristic counts the components of $K$ disjoint from components of $G$, minus the loops of $K$, which are not loops of $G$. See Figure \ref{LiftsExample}.

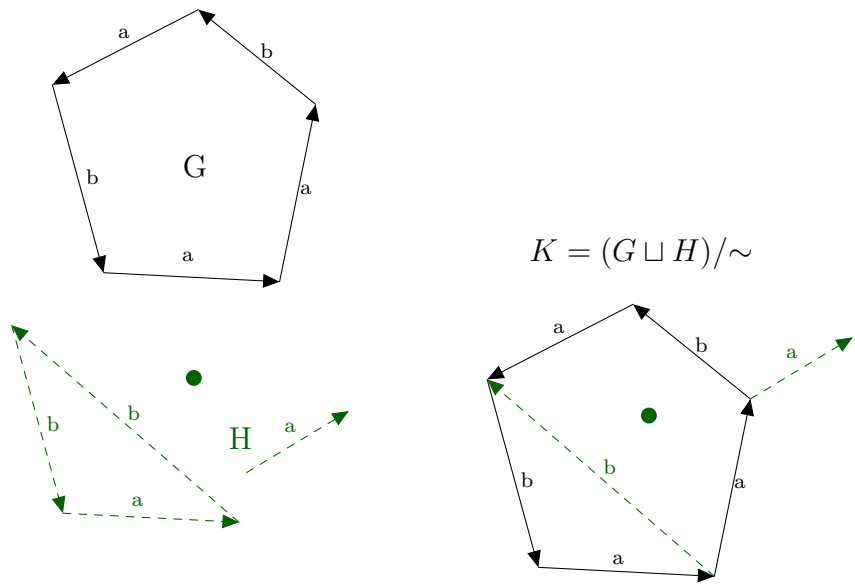
\begin{figure} 
\definecolor{qqwuqq}{rgb}{0,0.39,0}
\begin{tikzpicture}[line cap=round,line join=round,>=triangle 45,x=1.0cm,y=1.0cm]
\clip(-6.1,-0.99) rectangle (6.69,7.7);
\draw [->] (1.66,0.12) -- (4,0);
\draw [->] (4,0) -- (4.48,2.36);
\draw [->] (4.48,2.36) -- (2.92,3.62);
\draw [->] (2.92,3.62) -- (0.98,2.62);
\draw [->] (0.98,2.62) -- (1.66,0.12);
\draw [->,dash pattern=on 3pt off 3pt,color=qqwuqq] (4,0) -- (0.98,2.62);
\draw [->,dash pattern=on 3pt off 3pt,color=qqwuqq] (4.48,2.36) -- (5.84,3.18);
\draw [->,dash pattern=on 3pt off 3pt,color=qqwuqq] (-4.66,0.84) -- (-2.32,0.72);
\draw [->,dash pattern=on 3pt off 3pt,color=qqwuqq] (-5.34,3.34) -- (-4.66,0.84);
\draw [->,dash pattern=on 3pt off 3pt,color=qqwuqq] (-2.32,0.72) -- (-5.34,3.34);
\draw [->,dash pattern=on 3pt off 3pt,color=qqwuqq] (-2.22,1.38) -- (-0.86,2.2);
\draw [->] (-4.12,4.04) -- (-1.78,3.92);
\draw [->] (-1.78,3.92) -- (-1.3,6.28);
\draw [->] (-1.3,6.28) -- (-2.86,7.54);
\draw [->] (-2.86,7.54) -- (-4.8,6.54);
\draw [->] (-4.8,6.54) -- (-4.12,4.04);
\draw (-3.21,5.74) node[anchor=north west] {G};
\draw [color=qqwuqq](-2.6,2.13) node[anchor=north west] {H};
\draw (1.4,4.66) node[anchor=north west] {$K=(G \sqcup H)/ \sim$};
\begin{scriptsize}
\draw[color=black] (2.72,0.23) node {a};
\draw[color=black] (4.34,1.24) node {a};
\draw[color=black] (3.83,3.08) node {b};
\draw[color=black] (1.93,3.3) node {a};
\draw[color=black] (1.51,1.28) node {b};
\draw[color=qqwuqq] (2.61,1.46) node {b};
\draw[color=qqwuqq] (5.03,2.96) node {a};
\draw[color=qqwuqq] (-3.67,1) node {a};
\draw[color=qqwuqq] (-4.79,2.03) node {b};
\draw[color=qqwuqq] (-3.72,2.18) node {b};
\draw[color=qqwuqq] (-1.64,1.98) node {a};
\fill [color=qqwuqq] (-2.92,2.64) circle (3.0pt);
\draw[color=black] (-3,4.25) node {a};
\draw[color=black] (-1.43,5.15) node {a};
\draw[color=black] (-1.95,7) node {b};
\draw[color=black] (-3.85,7.22) node {a};
\draw[color=black] (-4.27,5.32) node {b};
\fill [color=qqwuqq] (3.13,2.14) circle (3.0pt);
\end{scriptsize}
\end{tikzpicture}
\caption{We expect roughly $1$ lift of $K$, since there is about $1/n$ probability that the diagonal $b$-edge closes up and there are about $n$ possibilities for the location of the isolated vertex. The green $a$-edge does not contribute anything, because there are roughly $n$ options for its endpoint and each of them appears with probability roughly $1/n$.}
\label{LiftsExample}
\end{figure}

\begin{lemma} \label{Estimate}
Suppose $G$ and $H$ are precovers and $K$ is a quotient of the precover $G \sqcup H$. Then we can express the expected number of maps from $K$ to the random completion $\overline{G}$ which extend the inclusion of $G$ as follows.
$$\E (\mu_{K \longrightarrow \overline{G}}) = n^{\chi_G (K)} + \mathcal{O}(n^{\chi_G (K)-1})$$
Here we fix $K, G$ and $H$ and we let $\overline{G}$ be a random degree $n$ completion of $G$.
\end{lemma}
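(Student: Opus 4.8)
The plan is to compute $\E(\mu_{K \longrightarrow \overline{G}})$ directly by linearity of expectation over the deterministic set of candidate vertex-maps, and then to observe that each candidate succeeds with a probability that is essentially independent of the candidate, so that the expectation factors as a count times a probability.

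First I would fix notation for the ``new'' part of $K$: set $V_{new} = V(K) - V(G)$ and $E_{new} = E(K) - E(G)$, so that $\chi_G(K) = V_{new} - E_{new}$, and split $E_{new} = \sum_j E_{new,j}$ according to the label $a_j$. Because an admissible map $\phi$ must extend the inclusion $G \hookrightarrow \overline{G}$ and be injective, the vertices of $K$ lying in $G$ are pinned to their images, while the $V_{new}$ new vertices must be sent injectively into the $n - |V(G)|$ added vertices of $\overline{G}$. Hence the number of admissible vertex-maps is exactly $(n-|V(G)|)(n-|V(G)|-1)\cdots(n-|V(G)|-V_{new}+1) = n^{V_{new}} + \mathcal{O}(n^{V_{new}-1})$, and $\E(\mu_{K\longrightarrow\overline{G}}) = \sum_\phi \Pb(\phi \text{ realizes every new edge})$.

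Next I would evaluate $\Pb(\phi \text{ valid})$ for a fixed $\phi$. The random completion chooses, independently for each label $a_j$, a uniform bijection $f_j$ between $V_j^{no}$ and $V_j^{ni}$, two sets of size $m_j = n - |E_j(G)|$. Each new $a_j$-edge $u \xrightarrow{a_j} w$ of $K$ imposes the single constraint $f_j(\phi(u)) = \phi(w)$. The structural point I would check carefully is that these constraints always assemble into a genuine partial injection: since $K$ is a precover each vertex has at most one incoming and one outgoing $a_j$-edge, and since $\phi$ is injective the sources $\phi(u)$ are pairwise distinct and likewise the targets $\phi(w)$; moreover $\phi(u)\in V_j^{no}$ and $\phi(w)\in V_j^{ni}$ (a new edge out of a $G$-vertex forces that vertex to have had no $a_j$-out-edge in $G$, as $G$ embeds in $K$). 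A uniform bijection of an $m_j$-set extends a prescribed partial injection of size $E_{new,j}$ with probability $(m_j-E_{new,j})!/m_j! = m_j^{-E_{new,j}}(1+\mathcal{O}(1/n))$, and by independence across labels $\Pb(\phi \text{ valid}) = \prod_j (m_j - E_{new,j})!/m_j!$. This is manifestly independent of $\phi$, so the sum factors and yields $\bigl(n^{V_{new}}+\mathcal{O}(n^{V_{new}-1})\bigr)\cdot n^{-E_{new}}(1+\mathcal{O}(1/n)) = n^{\chi_G(K)} + \mathcal{O}(n^{\chi_G(K)-1})$, with leading coefficient exactly $1$.

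I expect the only real obstacle to be the bookkeeping of the previous paragraph: confirming that for every admissible $\phi$ the induced edge-constraints are simultaneously satisfiable as a partial injection and that the source and target vertices genuinely lie in $V_j^{no}$ and $V_j^{ni}$. This is precisely where the hypotheses that $G$ embeds in $K$, that $K$ is a precover, and that $\phi$ is injective are all used; once it is verified, the per-map probability is independent of $\phi$ and the estimate collapses to a product of two falling factorials whose asymptotics are immediate. As an alternative I note the same bound can be obtained by induction on $E_{new}$, adding one new edge at a time and tracking the factor $\approx 1/n$ that each contributes, mirroring the injectivity arguments in the transitivity and `Jordan' lemmas; I would nonetheless keep the direct computation as the main line, since it exposes the leading coefficient $1$ transparently.
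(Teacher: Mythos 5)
Your proof is correct, and it takes a genuinely different route from the paper's. The paper argues by induction on the number of cells of $K \setminus G$: the base case is $K = G$; if a new edge $e$ exists it passes to $K' = K \setminus e$ and observes that, given a lift of $K'$, only about one in $n$ ways of completing $e$ is the correct one, so $\E(\mu_{K \longrightarrow \overline{G}}) = n^{-1}\E(\mu_{K' \longrightarrow \overline{G}})$ up to a lower-order error; once no new edges remain, it strips isolated new vertices one at a time, each contributing a factor of roughly $n$. You instead do a single global computation: linearity of expectation over the exactly $(n-|V(G)|)(n-|V(G)|-1)\cdots(n-|V(G)|-V_{new}+1)$ injective vertex maps extending the inclusion, times an exact per-map success probability $\prod_j (m_j - E_{new,j})!/m_j!$, justified by checking that the new-edge constraints of an admissible $\phi$ assemble into a partial injection from $V_j^{no}$ into $V_j^{ni}$ of size $E_{new,j}$, which a uniform random bijection extends with probability $(m_j - E_{new,j})!/m_j!$, independently across labels. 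Your version buys an exact closed formula rather than only an asymptotic, makes the leading coefficient $1$ and the order of the error transparent, and forces into the open the structural facts (sources and targets pairwise distinct and lying in $V_j^{no}$, resp.\ $V_j^{ni}$ --- this is where foldedness of $K$, injectivity of $\phi$, and the embedding of $G$ in $K$ enter) that the paper's induction uses silently in the assertion that ``only one of them allows $K$ to lift''. The paper's version is shorter, needs no per-label bookkeeping, and recycles the cell-peeling style of its transitivity and `Jordan' arguments; it is exactly the alternative you sketch in your closing paragraph. One caveat common to both proofs: the statement is implicitly restricted to quotients $K$ into which $G$ embeds and which are themselves precovers --- for degenerate quotients $\mu_{K \longrightarrow \overline{G}}$ vanishes identically --- and your write-up has the merit of making that restriction visible.
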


\begin{proof}
We'll prove this by induction on the number of cells in $K \setminus G$. For the base case of $K = G$, left hand side is $1$ and the right hand side is $1 + \mathcal{O}(n^{-1})$.

\begin{enumerate}
\item Suppose there exists an edge $e$ of $K$ not contained in $G$. Let $K'$ be $K \setminus e$. By the induction on the number of cells, we have $\E (\mu_{K' \longrightarrow \overline{G}}) = n^{\chi_G (K')} + \mathcal{O}(n^{\chi_G (K')-1})$.

There are between $n$ and $n - |E(K')|$ ways for $e$ to lift and only one of them allows $K$ to lift. Hence,
$$\E (\mu_{K \longrightarrow \overline{G}}) = n^{-1} \E (\mu_{K' \longrightarrow \overline{G}}) + \mathcal{O}(n^{-2})$$

\item If $K \setminus G$ contains no edges, then it is a disjoint union of $G$ and vertices. Suppose $v \in K \setminus G$ is a vertex. Let $K' = K \setminus v$. To lift $K$, we need to lift $K'$ and specify, where does $v$ go. We always have between $n$ and $n - v(K')$ options for $v$, so 
$$\E (\mu_{K \longrightarrow \overline{G}}) = n \E (\mu_{K' \longrightarrow \overline{G}}) + \mathcal{O}(1)$$
\end{enumerate}
\end{proof}

In particular, we can get the highest order term approximation to the total number of expected lifts of $H$ to a completion of $G$ by determining the largest relative Euler characteristic among the quotients of $G \sqcup H$ and the number of quotients, which achieve this minimum.

\begin{definition}[Relative rank, critical graphs and multiplicity]
\emph{The relative rank} $r_G (H)$ is $\min \chi_G (K)$, where the minimum goes over quotients of $G \sqcup H$.

We call the quotients which achieve the minimum \emph{critical graphs}. \emph{Relative multiplicity} is the number of critical graphs.

\end{definition}

\begin{lemma} \label{varianceOfLifts}
Suppose $G$ and $H$ are precovers, and $G'$ a random completion of $G$. Then the variance of $\tau_{H \longrightarrow \overline{G}}$ is as follows\,.$$\operatorname{Var}(\tau_{H \longrightarrow \overline{G}}) = \E (\tau_{(H \sqcup H) \longrightarrow \overline{G}}) - \E(\tau_{H \longrightarrow \overline{G}})^2$$
\end{lemma}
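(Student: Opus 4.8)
The plan is to reduce the statement to the elementary variance identity $\operatorname{Var}(X) = \E(X^2) - \E(X)^2$ applied to the random variable $X = \tau_{H \longrightarrow \overline{G}}$, and then to identify $\E(X^2)$ with the expected number of maps out of the disjoint union $H \sqcup H$. The entire content of the lemma is a single pointwise (deterministic) identity, valid for each fixed completion $\overline{G}$, which can afterwards be integrated against the measure on random completions.

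First I would fix an arbitrary degree $n$ completion $\overline{G}$ and exploit the universal property of the disjoint union: a label- and orientation-preserving morphism $H \sqcup H \longrightarrow \overline{G}$ is exactly the same data as an ordered pair $(f_1, f_2)$ of morphisms $f_i : H \longrightarrow \overline{G}$, since a map out of a disjoint union is determined by its restriction to each summand, with no compatibility condition between the two. This produces a bijection between the set of maps $H \sqcup H \longrightarrow \overline{G}$ and the set of ordered pairs of maps $H \longrightarrow \overline{G}$. Counting both sides gives the deterministic equality $\tau_{(H \sqcup H) \longrightarrow \overline{G}} = \bigl(\tau_{H \longrightarrow \overline{G}}\bigr)^2$, understood as an identity of functions of the random object $\overline{G}$.

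Next I would take expectations of this identity over the random completion. By linearity this yields $\E\bigl(\tau_{(H \sqcup H) \longrightarrow \overline{G}}\bigr) = \E\bigl((\tau_{H \longrightarrow \overline{G}})^2\bigr)$. Substituting into $\operatorname{Var}(\tau_{H \longrightarrow \overline{G}}) = \E\bigl((\tau_{H \longrightarrow \overline{G}})^2\bigr) - \E(\tau_{H \longrightarrow \overline{G}})^2$ immediately gives the claimed formula.

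The computation is essentially cost-free, so there is no substantive obstacle. The only point meriting a moment's care is verifying that ``map'' is taken to mean a label- and orientation-preserving graph morphism, and that such morphisms out of a disjoint union genuinely factor componentwise, so that the bijection above is exact rather than merely a bound; once this is pinned down, the rest is the standard second-moment identity.
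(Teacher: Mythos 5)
Your proposal is correct and follows exactly the paper's argument: the elementary identity $\operatorname{Var}(X) = \E(X^2) - \E(X)^2$ combined with the observation that maps out of $H \sqcup H$ are precisely ordered pairs of maps out of $H$, so that $\tau_{(H \sqcup H) \longrightarrow \overline{G}} = \tau_{H \longrightarrow \overline{G}}^2$ pointwise. You spell out the componentwise factorization of maps from a disjoint union more explicitly than the paper does, but the content is identical.
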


\begin{proof}
Write out the expression for the variance.
$$\operatorname{Var}(\tau_{H \longrightarrow G}) = \E(\tau_{H \longrightarrow \overline{G}}^2) - \E(\tau_{H \longrightarrow \overline{G}})^2$$

The expectation of the square $\E(\tau_{H \longrightarrow \overline{G}}^2)$ is the same  as the expected number of pairs of maps  $H \longrightarrow \overline{G}$, which is the same as the number of maps $H \sqcup H \longrightarrow \overline{G}$.
\end{proof}

We will use the Lemmas \ref{TotalToInjective} and \ref{varianceOfLifts} to count the mean and the variance of the number of the lifts.

\begin{example} \label{CanonicalExample}
Suppose $\gamma_1, \ldots , \gamma_k \in F_r$ and $\Gamma_1, \ldots, \Gamma_l < F_k$ and each $\Gamma_j$ has rank at least $2$.
Suppose that $\gamma_i = u_i ^{k_i}$ and that $u_i$ is not a proper power.

Let by abuse of notation $\gamma_i$ be a core graph of $\langle \gamma_i \rangle$. Let $G_j$ be the core graph of $\Gamma_j$.
Let graph $G$ be the disjoint union of $a_i$ copies of $\gamma_i$ and $b_j$ copies of $\Gamma_j$.

Now take a random completion of $G$. We'll count lifts of $\gamma_i$ and $\Gamma_j$.
Let's first calculate $\tau_{\gamma_i \longrightarrow \overline{G}}$. For this we'll need to calculate a contribution from each quotient of $G \cup \gamma_i$. The relative rank $r_G(\gamma_i)$ is at most $ \chi (\gamma_i) = 0$. It can't be smaller, because then there would need to be a component of a critical graph, which is simply connected. That is not possible, because the quotient map is locally injective and $\gamma_i$ contains no leaves.
When counting the critical graphs, two types arise.
\begin{enumerate}
\item The image of $\gamma_i$ is disjoint from all $G$ (we're talking about the additionally copy of $\gamma_i$, not about one of the copies in $G$). There are $\sigma(k_i)$ such quotients, where $\sigma$ counts divisors of an integer.
\item The image of $\gamma_i$ lies in $G$. We can express this quantity as a linear function of $a_j$'s and $b_j$'s.
$$\tau_{\gamma_i \longrightarrow G} = \sum_j a_j \tau_{\gamma_i \longrightarrow \gamma_j} + \sum_j b_j \tau_{\gamma_i \longrightarrow G_j}$$
\end{enumerate}
Use Lemma \ref{Estimate} to get 

$$ \E (\tau_{\gamma_i \longrightarrow \overline{G}}) = \tau_{\gamma_i \longrightarrow G} + \sigma(k_i) + \mathcal{O}(n^{-1})\,.$$

Let's also compute the variance of $\tau_{\gamma_i \longrightarrow \overline{G}}$. Let $H = \gamma_i \sqcup \gamma_i$. By Lemma \ref{varianceOfLifts},

$$\operatorname{Var}(\tau_{\gamma_i \longrightarrow \overline{G}}) =\E (\tau_{H \longrightarrow \overline{G}}) - \E(\tau_{\gamma_i \longrightarrow \overline{G}})^2\,.$$

We have an estimate for the second term, so let's compute the first one.
Again $r_G (H) = 0$. There are four types of quotient contributing to the critical graphs.
\begin{enumerate}
\item Image of $H$ are two circles disjoint from $G$. There are $\sigma(k_i)^2$ such graphs.

\item Both circles of $H$ map to a single circle disjoint from $G$. There are $D(k_i) = \sum_{d|k_i} d$ such graphs as we need to specify the size of the circle and the distance by which are the images of the two circles shifted.

\item One of the circles maps to $G$ and the other remains disjoint. There are $2 \sigma(k_i) \tau_{\gamma_i \longrightarrow G}$ such critical graphs.

\item Both circles map to $G$. There are $\tau_{\gamma_i \longrightarrow G}^2$ such critical graphs.
\end{enumerate}

Add up all these contributions.
\begin{equation*}
\begin{aligned}
\E (\tau_{H \longrightarrow \overline{G}}) &= \sigma(k_i)^2 + D(k_i) + 2 \sigma(k_i) \tau_{\gamma_i \longrightarrow G} + \tau_{\gamma_i \longrightarrow G}^2 + \mathcal{O}(n^{-1})\\
&=(\sigma(k_i) + \tau_{\gamma_i \longrightarrow G})^2 + D(k_i)+\mathcal{O}(n^{-1})
\end{aligned}
\end{equation*}

If we plug it into the expression for variance, most terms cancel out.
\begin{equation*}
\begin{aligned}
\operatorname{Var}(\tau_{\gamma_i \longrightarrow G})&=  (\sigma(k_i) + \tau_{\gamma_i \longrightarrow G})^2 + D(k_i)+\mathcal{O}(n^{-1}) -
(\tau_{\gamma_i \longrightarrow G} + \sigma(k_i) + \mathcal{O}(n^{-1}))^2\\
&= D(k_i)  + \mathcal{O}(n^{-1}).
\end{aligned}
\end{equation*}

Let's now compute the number of lifts of $G_i$. If $b_i \neq 0$, then $\chi_G (G_i) \geq 0$, because we can send $G_i$ to $G$. Also, $\chi_G (G_i) \leq 0$ since no component of a quotient of $G \sqcup G_i$ is simply connected.

Suppose $K$ is a quotient of $G \sqcup G_i$ such that $G \longrightarrow K$ is an injection. Let $L$ be $q(G_i) \setminus q(G)$, where $q$ is the quotient map. There may be open edges in $L$, so it is not necessarily a graph. Then $\chi_G(K) = V(L) - E(L)$. If $K$ is a critical graph, then $V(L) = E(L)$. If $L$ is non-empty, it must contain a component $L'$ with $V(L') \geq E(L')$. The component $L'$ is either a tree, a tree minus a leaf, or a rank $1$ graph. If a component of $L$ is a genuine graph, then it is also a component of $K$. Such a component of $K$ is a locally injective quotient of $G_i$ and therefore has rank at least $2$.  If $L'$ is a tree minus a leaf, then another leaf of $L'$ is a leaf of $K$. This is impossible since all vertices in $G \sqcup G_i \sqcup G_i$ have valence at least $2$ and the quotient map is locally injective.
Therefore $L$ is empty and the critical graphs are precisely the quotients arising from the maps from $G_i$ to $G$. The number of critical graphs is $\tau_{G_i \longrightarrow G}$ and we can use  Lemma \ref{Estimate} to express the expected number of lifts of $G_i$ to a completion of $G$. Therefore,
$$\E(\tau_{G_i \longrightarrow \overline{G}}) = \tau_{G_i \longrightarrow G} + \mathcal{O}(n^{-1}) =   \sum_j b_j \tau_{G_i \longrightarrow G_j} + \mathcal{O}(n^{-1})\,.$$

Similarly, we can compute the variance using Lemma \ref{varianceOfLifts}. We'll need to estimate $\tau_{(G_i \sqcup G_i) \longrightarrow \overline{G}}$. The relative rank of $r_G(G_i \sqcup G_i)$ is at least $0$ because we can send both $G_i$'s to a copy of $G_i$ in $G$. It can't be less, since no component of a quotient of $G \sqcup G_i \sqcup G_i$ is simply connected. 

Suppose $K$ is a critical graph and $L = q(G_i \sqcup G_i) \setminus q(G)$ is non-empty. Then there exists a component $L'$ of $L$, which is either a tree, a tree minus a leaf, or a rank $1$ graph. If $L'$ is a tree or a rank $1$ graph, then it is a component of a  quotient of $G_i \sqcup G_i$. However, the components of quotients of $G_i \sqcup G_i$ have rank at least $2$. If $L'$ is a tree minus a vertex, then another leaf of $L'$  is a leaf of $K$. This is impossible because $K$ is a locally injective quotient of a graph with minimal valence $2$. Therefore $L$ is empty, and $G_i \sqcup G_i$ maps to $G$ in any critical quotient.

There are $(\sum_j b_j \tau_{G_i \longrightarrow G_j})^2$ critical graphs, because we need to specify the images of two copies of $G_i$.

Hence,
$$\E(\tau_{(G_i \sqcup G_i) \longrightarrow \overline{G}})=(\sum_j b_j \tau_{G_i \longrightarrow G_j})^2 + \mathcal{O}(n^{-1})\,.$$

The leading terms cancel out and we are left with a variance that goes to $0$ as $n$ goes to infinity.
$$Var(\tau_{G_i \longrightarrow \overline{G}}) = \E(\tau_{(G_i \sqcup G_i) \longrightarrow \overline{G}})- \E(\tau_{G_i \longrightarrow \overline{G}})^2 = \mathcal{O}(n^{-1})$$
\end{example}

Eventually, the goal is to separate subgroups using distinct numbers of fixed points. In order to do this, we need the following technical lemmas, which promotes groups commensurable to subgroups to actual subgroups. The first lemma says that a core of a finite index subgroup is a cover of a core.

\begin{lemma} \label{FiniteIndex}
Suppose $A,B < F_k$ are finitely generated subgroups and $A$ has a finite index in $B$. Then $\operatorname{Core}(A)$ is a degree $[B:A]$ cover of $\operatorname{Core}(B)$ and in particular $\frac{|V(\operatorname{Core}(A))|}{|V(\operatorname{Core}(B))|} = [B:A]$.
\end{lemma}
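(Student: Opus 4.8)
The plan is to realize the inclusion $A \leq B$ as a covering of the associated Schreier graphs and then to check that this covering restricts to a covering of their cores. Write $T$ for the universal cover of $R_k$, i.e. the Cayley graph of $F_k$, a $2k$-regular tree on which $F_k$ acts freely. Then $X_A = A \backslash T$ and $X_B = B \backslash T$, and since $A \leq B$ the natural projection $p : X_A \longrightarrow X_B$ is a covering map whose degree equals the number of $A$-cosets inside a single $B$-coset, namely $[B:A]$. Both $X_A$ and $X_B$ are connected, and I will assume $B \neq 1$ so that the cores are nonempty (otherwise $A = 1$ as well and the statement is degenerate). As $A$ and $B$ are finitely generated, $\operatorname{Core}(X_A)$ and $\operatorname{Core}(X_B)$ are finite graphs.

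The heart of the argument is the identity $\operatorname{Core}(X_A) = p^{-1}(\operatorname{Core}(X_B))$. The inclusion ``$\subseteq$'' is easy and does not use the hypothesis: a covering map is a local homeomorphism, so it carries reduced (backtrack-free) closed paths to reduced closed paths; hence every edge lying on a cycle of $X_A$ maps onto an edge lying on a cycle of $X_B$, giving $p(\operatorname{Core}(X_A)) \subseteq \operatorname{Core}(X_B)$. The reverse inclusion is where I expect the only real difficulty, and it is exactly the place where finiteness of the index is indispensable. Let $e$ be an edge of $X_A$ with $p(e)$ on a cycle $c$ of $X_B$; choose $c$ cyclically reduced and based so that its first edge is $p(e)$. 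Lifting $c$ through the initial vertex of $e$ produces a reduced path whose first edge is $e$. Because the fibre of $p$ has exactly $[B:A]$ points, the monodromy of $c$ permutes this finite fibre, so iterating $c$ a bounded number of times lifts to a closed path based at the initial vertex of $e$; since $c$ is cyclically reduced, the concatenated lifts have no backtracking at the junctions, so this closed path is reduced and traverses $e$. Thus $e \in \operatorname{Core}(X_A)$. (For an infinite-index inclusion this step genuinely fails: a lift of $c$ can run off along an infinite tree hanging off the core and never close up.)

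With the identity in hand, the conclusion is formal. The restriction of a covering to the full preimage of a subgraph is again a covering of that subgraph of the same degree, so $p : \operatorname{Core}(X_A) \longrightarrow \operatorname{Core}(X_B)$ is a covering of degree $[B:A]$; surjectivity is automatic since $\operatorname{Core}(X_A) = p^{-1}(\operatorname{Core}(X_B))$ and $p$ is onto. Finally, a degree-$d$ covering of finite graphs places exactly $d$ vertices over each vertex of the base, whence $|V(\operatorname{Core}(X_A))| = [B:A]\cdot|V(\operatorname{Core}(X_B))|$, which rearranges to the displayed ratio. The one point to watch throughout is the notion of cycle implicit in Definition \ref{CoreDefinition}: I use that an edge lies in the core precisely when it is traversed by some cyclically reduced closed path, and that covering maps preserve this property downward (local injectivity) and reflect it upward (finite monodromy), which are exactly the two directions above.
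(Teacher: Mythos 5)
Your proof is correct and takes essentially the same route as the paper: both pass to the covering map $p\colon X_A \longrightarrow X_B$ of degree $[B:A]$ between Schreier graphs, show that every edge of $X_A$ lying over $\operatorname{Core}(X_B)$ is traversed by a closed lift of a suitable iterate of a loop in $\operatorname{Core}(X_B)$, and conclude that $p$ restricts to a degree-$[B:A]$ covering of the cores, whence the vertex count. If anything, yours is slightly more careful: the paper asserts that the $d$-th power of the loop lifts closed (strictly one needs the orbit length of the fibre point under the monodromy, as you use), and it leaves implicit both the reducedness of the concatenated lifts and the easy inclusion $p(\operatorname{Core}(X_A)) \subseteq \operatorname{Core}(X_B)$, which you spell out.
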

\begin{proof}
Let $X_A$ and $X_B$ be the covers of $R_k$ associated to $A$ and $B$. Let $p: X_A \longrightarrow X_B$ be the covering map. Let $d = [B:A]$. Suppose $e \in E(X_A)$ with $p(e) \in \operatorname{Core}(B)$. Then there exists some loop in $\operatorname{Core}(B)$ containing $p(e)$. The $d$-th power of this loop lifts to a loop in $X_A$, which contains $e$, and hence $e \in \operatorname{Core}(A)$.
The restriction $p_{\operatorname{Core}(A)}$ is a local homeomorphism which covers $\operatorname{Core}(B)$ evenly and   $\operatorname{Core}(A)$ is a cover of $\operatorname{Core}(B)$.
\end{proof}

\begin{lemma} \label{SubgroupLifting}
If $H_1, H_2$ are finitely generated subgroups of a free group, $G < H_1 \cap H_2$ has finite index in $H_1$,  and all divisors of $[H_1:G]$ distinct from $1$ are larger than $|V(\operatorname{Core}(H_2))|$, then $H_1$ is a subgroup of  $H_2$.
\end{lemma}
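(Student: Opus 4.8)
The plan is to reduce everything to a bound on the single index $d := [H_1 : H_1 \cap H_2]$. Since $G \leqslant H_1 \cap H_2 \leqslant H_1$ and $[H_1 : G] < \infty$, the tower law $[H_1 : G] = [H_1 : H_1\cap H_2]\cdot[H_1\cap H_2 : G]$ shows that $d$ is finite and $d \mid [H_1 : G]$. The hypothesis is that every divisor of $[H_1 : G]$ other than $1$ exceeds $|V(\operatorname{Core}(H_2))|$, so it suffices to prove the a priori inequality $d \leq |V(\operatorname{Core}(H_2))|$: this forces $d = 1$, i.e. $H_1 \cap H_2 = H_1$, which is exactly $H_1 \leqslant H_2$. (If $H_1$ is trivial there is nothing to prove, so I may assume $\operatorname{Core}(H_1) \neq \emptyset$.)

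To get this inequality I would compare the two ways in which $\operatorname{Core}(H_1 \cap H_2)$ lies over the cores of $H_1$ and of $H_2$. Applying Lemma \ref{FiniteIndex} to $H_1 \cap H_2 \leqslant H_1$, the map $q : \operatorname{Core}(H_1 \cap H_2) \to \operatorname{Core}(H_1)$ that is the restriction of the covering $X_{H_1\cap H_2} \to X_{H_1}$, $g(H_1 \cap H_2) \mapsto g H_1$, is a degree-$d$ cover; in particular every vertex of $\operatorname{Core}(H_1)$ has exactly $d$ preimages. Separately, the covering $X_{H_1 \cap H_2} \to X_{H_2}$ induced by $g(H_1 \cap H_2) \mapsto g H_2$ sends loops to loops, so it restricts to a map $\phi : \operatorname{Core}(H_1 \cap H_2) \to \operatorname{Core}(H_2)$.

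The key step is that the combined vertex map $v \mapsto (q(v), \phi(v))$ is injective. A vertex of $\operatorname{Core}(H_1 \cap H_2)$ is a coset $g(H_1\cap H_2)$, and if $g(H_1 \cap H_2)$ and $g'(H_1 \cap H_2)$ satisfy $gH_1 = g'H_1$ and $gH_2 = g'H_2$, then $g^{-1}g'$ lies in both $H_1$ and $H_2$, hence in $H_1 \cap H_2$, so the two cosets coincide. Now I would fix any vertex $u$ of $\operatorname{Core}(H_1)$: its $d$ distinct $q$-preimages all have first coordinate $u$, so by injectivity their $\phi$-images are $d$ \emph{distinct} vertices of $\operatorname{Core}(H_2)$. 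Hence $d \leq |V(\operatorname{Core}(H_2))|$, which together with the divisor hypothesis gives $d = 1$ and therefore $H_1 \leqslant H_2$.

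I expect the only real content to be the injectivity of $v \mapsto (q(v), \phi(v))$, that is, recognising $\operatorname{Core}(H_1 \cap H_2)$ as sitting inside the fibre product of the two covers so that a vertex is pinned down by its pair of images under the two projections. Everything else — the multiplicativity of the index, the loops-to-loops observation, and the appeal to Lemma \ref{FiniteIndex} — is routine, so the main obstacle is simply to set up the coset bookkeeping cleanly enough that the counting of preimages is transparent.
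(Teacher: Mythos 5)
Your proof is correct and takes essentially the same route as the paper's: both arguments identify $\operatorname{Core}(H_1\cap H_2)$ inside the fibre product of $\operatorname{Core}(H_1)$ and $\operatorname{Core}(H_2)$ (your injectivity of $v \mapsto (q(v),\phi(v))$ is exactly the paper's ``component of the pullback''), and both use Lemma \ref{FiniteIndex} to turn this into the bound $[H_1:H_1\cap H_2]\leq |V(\operatorname{Core}(H_2))|$, which together with $[H_1:H_1\cap H_2]$ dividing $[H_1:G]$ forces the index to be $1$. The only cosmetic differences are that you get the divisibility from the tower law where the paper applies Lemma \ref{FiniteIndex} a second time, and you count preimages of a single vertex where the paper compares total vertex counts.
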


\begin{proof}
Consider $\operatorname{Core}(H_1 \cap H_2)$. We can get it as a component of the pullback of the maps $\operatorname{Core}(H_i) \longrightarrow X$, where $X$ is the rose $R_k$. The pullback contains $|V(\operatorname{Core}(H_1)| |V(\operatorname{Core}(H_2)|$ vertices, therefore
$$|V(\operatorname{Core}(H_1 \cap H_2)| \leq |V(\operatorname{Core}(H_1)| |V(\operatorname{Core}(H_2)|\,.$$
The group $G$ is a finite index subgroup of $H_1 \cap H_2$, which is a finite index subgroup of $H_1$. By lemma \ref{FiniteIndex} applied to $H_1 \cap H_2 < H_1$ and to $G <H_1 \cap H_2$, $|V(\operatorname{Core}(H_1))|$ divides $|V(\operatorname{Core}(H_1 \cap H_2)|$, which divides $|V(\operatorname{Core}(G))|$. Then $|V(\operatorname{Core}(H_1 \cap H_2)|= d|V(\operatorname{Core}(H_1)|$, where $d$ divides $\frac{|V(\operatorname{Core}(G))|}{|V(\operatorname{Core}(H_1))|}= [H_1 :G]$. But every nontrivial divisor of $[H_1 : G]$ is larger than $|V(\operatorname{Core}(H_2))|$, so $|V(\operatorname{Core}(H_1 \cap H_2)| = |V(\operatorname{Core}(H_1)|$.
 Since $\operatorname{Core}(H_1 \cap H_2)$ is a covering of $\operatorname{Core}(H_1)$, the two graphs are in fact equal and $H_1 \cap H_2 = H_1$.
\end{proof}

Finally, we can put everything together in the proof of the following separability property, which can be thought of as an `alternating' refinement of subgroup into-conjugacy separability. We will do this by using that whenever $H_1$ is conjugate into $H_2$, it fixes at least as many elements as $H_2$. We will also use that the same is true for concrete characteristic subgroups.
For example suppose $H_1$ is not conjugate into $H_2$ and $H_2$ is not conjugate into $H_1$. If $H_1$ fixes more points than $H_2$, then $f(H_2)$ is not conjugate into $f(H_1)$. If additionally the intersection of all degree $2$ subgroups of $H_2$ fixes more points than the intersection of all degree $2$ subgroups of $H_1$, then $f(H_1)$ is not conjugate into $f(H_2)$.

\begin{theorem} \label{MainTheoremOfTheChapter3}
Suppose $H_1, H_2, \ldots , H_n < F_r$ are finitely generated subgroups of infinite index. Then there exists a surjective homomorphism $f: F_r \twoheadrightarrow A_m$ such that whenever $H_i$ is not conjugate into $H_j$, then $f(H_i)$ is not conjugate into $f(H_j)$.
\end{theorem}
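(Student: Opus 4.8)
The plan is to read the relation ``conjugate into'' off from numbers of fixed points. For $f:F_r\to S_N$ and $H\le F_r$ let $\operatorname{Fix}(f(H))\subseteq[N]$ be the set of common fixed points of $f(H)$; by the basepoint description underlying Lemma \ref{TotalToInjective} this cardinality equals $\tau_{\operatorname{Core}(H)\to\overline G}$ for the cover $\overline G$ realizing $f$. This count is a conjugacy invariant and reverses inclusion, so if $f(H_i)$ is conjugate into $f(H_j)$ then $|\operatorname{Fix}(f(H_i))|\ge|\operatorname{Fix}(f(H_j))|$; moreover this survives under any fully invariant (verbal) operator $\Phi$, since $f(\Phi(H))=\Phi(f(H))$ (as $f$ restricts to a surjection $H\twoheadrightarrow f(H)$) and $\Phi$ respects both conjugation and inclusion. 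Thus it suffices to build one $f$ for which, for every ordered pair with $H_i$ \emph{not} conjugate into $H_j$, some verbal $\Phi$ satisfies the strict reverse inequality $|\operatorname{Fix}(f(\Phi(H_i)))|<|\operatorname{Fix}(f(\Phi(H_j)))|$; the contrapositive of the monotonicity then yields $f(H_i)$ not conjugate into $f(H_j)$.

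To manufacture such inequalities I would take the condition graph $G$ to be a disjoint union of the cores $\operatorname{Core}(H_\ell)$ (together with some characteristic covers), with multiplicities $m_\ell$ as free parameters, and run the random completion of Section \ref{SectionSetUp}. By Lemmas \ref{Estimate}, \ref{TotalToInjective} and \ref{varianceOfLifts}, exactly as in Example \ref{CanonicalExample}, whenever $\Phi(H_i)$ has rank at least two the quantity $|\operatorname{Fix}(f(\Phi(H_i)))|$ concentrates at the deterministic integer $\sum_\ell m_\ell\,\tau_{\operatorname{Core}(\Phi(H_i))\to\operatorname{Core}(H_\ell)}$, with variance $O(1/N)$. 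Choosing $\Phi=\Phi_p$ with $\Phi_p(H)=H^p[H,H]$ for a prime $p$ larger than all $|V(\operatorname{Core}(H_\ell))|$, Lemma \ref{FiniteIndex} makes $\operatorname{Core}(\Phi_p(H))$ a $p^{\operatorname{rk}H}$-fold cover of $\operatorname{Core}(H)$, and Lemma \ref{SubgroupLifting} shows $\Phi_p(H_i)$ is conjugate into $H_\ell$ \emph{iff} $H_i$ is. Hence each such statistic is supported exactly on the principal up-set $U_i=\{\ell:H_i\ \text{conjugate into}\ H_\ell\}$, and the diagonal summand $\ell=i$ is always present.

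Since a finite poset is recovered from its principal up-sets ($H_i$ is conjugate into $H_j$ iff $U_j\subseteq U_i$), the remaining task is to choose a finite family of verbal operators and multiplicities so that the resulting tuple of concentrated fixed-point counts order-embeds the conjugacy-into poset, deciding every required pair. A single statistic cannot do this because of incomparable pairs, which must be separated in both directions at once; the clean way to organise this is to fix a realizer of the poset — a family of linear extensions whose intersection is the order, which exists precisely because ``conjugate into'' is antisymmetric on conjugacy classes \cite[Lemma 2.1]{bogopolski2010subgroup} — and to arrange one monotone statistic strictly decreasing along each extension. The prototype is the two-operator scheme sketched before the theorem: the plain count decides one direction of an incomparable pair and the count for $\Phi_2$ the other, the point being that for incomparable subgroups these need not be ordered the same way, and asymmetry in the chosen multiplicities breaks any symmetry between the subgroups themselves.

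I expect the realization in the previous paragraph to be the main obstacle: producing enough genuinely independent verbal operators — via distinct primes and iterated characteristic covers — whose concentrated counts can be forced to decrease along the prescribed linear extensions under a single shared multiplicity vector, i.e.\ turning the mere existence of a realizer into a solvable finite system of strict integer inequalities. Granting this, the endgame is routine. Each of the finitely many strict inequalities holds with probability $1-O(1/N)$ (the counts are integers equal to their means off an $O(1/N)$ set); no component of $G$ is a cover of $R_r$, because every $H_\ell$, and hence every $\Phi_p(H_\ell)$, has infinite index, so Theorem \ref{MainTheorem} gives $\Gamma_N(G)=A_N$ with probability tending to $2^{-r}>0$; a union bound then makes the intersection of all these events have positive probability for large $N$, and any sample point in it is the desired surjection $f:F_r\twoheadrightarrow A_N$. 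Two points need separate care: rank-one subgroups $\langle g_\ell\rangle$, whose fixed-point counts do not concentrate (the residual term $D(k_\ell)$ in Example \ref{CanonicalExample}), must instead be handled through the cycle statistics of the characteristic powers $\langle g_\ell^{\,t}\rangle$; and one should confirm that replacing each $H_\ell$ by a cyclically reduced conjugate leaves the core sizes feeding Lemma \ref{SubgroupLifting} unchanged.
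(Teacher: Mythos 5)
Your framework is the same as the paper's---fixed-point counts realized as lift counts $\tau_{\operatorname{Core}(\cdot)\longrightarrow \overline{G}}$, monotonicity of these counts under ``conjugate into'' after passing to characteristic subgroups that commute with surjections, concentration via Lemmas \ref{TotalToInjective}, \ref{Estimate}, \ref{varianceOfLifts} and Example \ref{CanonicalExample}, promotion of finite-index containments via Lemma \ref{SubgroupLifting}, and a positive-probability alternating image from Theorem \ref{MainTheorem}. But the step you yourself defer as ``the main obstacle'' is the actual content of the theorem, and in the form you set it up it cannot be completed without a new idea. Because you condition on copies of $\operatorname{Core}(H_\ell)$ and your coefficients are $\tau_{\operatorname{Core}(\Phi_p(H_i))\longrightarrow\operatorname{Core}(H_\ell)}$, Lemma \ref{SubgroupLifting} (for any sufficiently large prime $p$) forces every one of your statistics to have support \emph{exactly} the up-set $U_i=\{\ell: H_i \prec H_\ell\}$, independently of $p$. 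For an incomparable pair, $i\in U_i\setminus U_j$ and $j\in U_j\setminus U_i$, so under a single shared multiplicity vector you need at least two operators whose coefficient vectors on these supports are non-proportional in a controlled way; nothing about the family $\Phi_p(H)=H^p[H,H]$ guarantees this (if the coefficient vectors happened to be proportional across all your primes, the sign of $S_i^{\Phi}-S_j^{\Phi}$ would be the same for every $\Phi$ and one of the two required inequalities would be unobtainable). No amount of realizer/linear-extension bookkeeping removes this: producing the ``independent'' statistics is precisely what has to be proved, and your proposal leaves it open.

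The paper fills this hole with a mechanism your proposal lacks, and which avoids realizers and two-sided separation entirely. Order the subgroups along a \emph{single} linear extension of $\prec$; choose a decreasing hierarchy of primes, $p_j > p_k^{(k!)^{\operatorname{rk}H_k}}\,|V(\operatorname{Core}(H_k))|$ for $j<k$; set $G_{i,j}$ to be the intersection of all index-$p_j$ subgroups of $H_i$; and condition on $a_j$ copies of the \emph{diagonal characteristic cores} $\operatorname{Core}(G_{j,j})$ with rapidly growing multiplicities $a_j > n a_{j-1}C+K$. The crucial payoff is triangularity of the coefficient matrix: since the targets $\operatorname{Core}(G_{k,k})$ have girth at least $p_k$, Lemma \ref{SubgroupLifting} together with a girth comparison shows $\tau_{\operatorname{Core}(G_{i,j})\longrightarrow\operatorname{Core}(G_{k,k})}=0$ unless both $k\leq j$ and $H_i\prec H_k$. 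Each required non-relation is then witnessed \emph{one-sidedly against the diagonal}, with the operator chosen according to the target index: if $H_i\not\prec H_j$, then with high probability $\operatorname{fix}(f(G_{i,j}))\leq n a_{j-1}C+K < a_j \leq \operatorname{fix}(f(G_{j,j}))$, whereas $f(H_i)\prec f(H_j)$ would force $f(G_{i,j})\prec f(G_{j,j})$ and hence $\operatorname{fix}(f(G_{i,j}))\geq \operatorname{fix}(f(G_{j,j}))$, a contradiction; Theorem \ref{MainTheorem} then supplies an alternating image with positive probability. Your two side concerns also dissolve in this scheme: rank-one subgroups have bounded (not vanishing) variance, which the fixed-width window $K$ in Equation (\ref{Estimate2}) absorbs against the geometric growth of the $a_j$, and cores are conjugation-invariant, so cyclic reduction never enters.
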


\begin{proof}
Denote the relation of `is conjugate into' by `$\prec$'.
Conjugacy classes of finitely generated subgroups of $F_r$ form a poset with respect to $\prec$ so after reordering and removing duplicates, we may assume that $H_i \prec H_j$ implies $i \leq j$.

Let $p_1, p_2, \ldots, p_n$ be primes larger than $\max_i (V(\operatorname{Core}(H_i))$ with $p_j > p_k^{(k!)^{rk{H_k}}} V(\operatorname{Core}(H_k))$ whenever $j<k$. Let $G_{i,j}$ be the intersection of all index $p_j$ subgroups of $H_i$.
Let graph $G$ be a union of $a_i$ copies of $\operatorname{Core}(G_{i,i})$, where $a_i$'s are to be specified later. Let $f: F_r \longrightarrow A_m$ be a random map arising from a random completion of $G$. The group $f(G_{i,j})$ is the intersection of all index $p_j$ subgroups of $f(H_i)$. Indeed, every index $p_j$ subgroup of $f(H_i)$ is an image of an index $p_j$ subgroup of $H_i$.

If $f(H_i) \prec f(H_j)$, then $fix(f(H_i)) \geq fix(f(H_j))$, but also $f(G_{i,k}) \prec f(G_{j,k})$ and hence $fix(G_{i,k}) \geq fix(G_{j,k})$.

By Example \ref{CanonicalExample} for every $\varepsilon$ there exists $K=K(\varepsilon)$ independent of $a_1, \ldots , a_n$ such that for all sufficiently large $m$ 
\begin{equation} \label{Estimate2}
\Pb( \forall i,j, |fix (G_{i,j}) - \Sigma_k a_k \tau_{\operatorname{Core}(G_{i,j}) \longrightarrow \operatorname{Core}(G_{k,k})}| < K) > 1 - \varepsilon
\end{equation} 
In words, the number of fixed points of $G_{i,j}$ belongs with high probability to a specific interval of length $2K$. By controlling the center of the interval, we will ensure that these groups often fix distinct numbers of elements.

If $\tau_{\operatorname{Core}(G_{i,j}) \longrightarrow \operatorname{Core}(G_{k,k})} > 0$, then $G_{i,j} < G_{k,k}^g$ for some $g$. Both $H_i$ and $G_{k,k}^g$ are subgroups of a free group, and the  index of $G_{i,j} < H_i \cap G_{k,k}^g$ in $H_i$ is a power of $p_j$. The core of $G_{k,k}$ contains at most $p_k^{(k!)^{rk{H_k}}} V(\operatorname{Core}(H_k))$ vertices. If $j<k$, then $p_j > p_k^{(k!)^{rk{H_k}}} V(\operatorname{Core}(H_k))$ and by Lemma \ref{SubgroupLifting} $H_i < G_{k,k}^g$. This is a contradiction since the girth of $\operatorname{Core}(H_i)$ is at most $V(\operatorname{Core}(H_i))$ and the girth of $\operatorname{Core}(G_{k,k})$ is at least $p_k > V(\operatorname{Core}(H_i))$.

We also have have $p_j > V(\operatorname{Core}(H_k))$, so Lemma \ref{SubgroupLifting} applied to $H_i, G_{i,j}$ and $H_k^g$ gives that $H_i \prec H_k$.

%

Let $K$ be such that the probability in Equation \ref{Estimate2} is at least $p = 1- 2^{-r-1}$. Let $a_1, \ldots , a_n$ satisfy $a_j > na_{j-1} C + K$, where $C = \max_{i,j,k} \tau_{\operatorname{Core}(G_{i,j}) \longrightarrow \operatorname{Core}(G_{k,k})} $.

All of the following is simultaneously true with probability at least $1- 2^{-r-1}$. For all $j$, $fix(G_{j,j}) \geq a_j$. For all $i,j$, if $H_i$ is not conjugate into $H_j$, then $\operatorname{fix}(G_{i,j}) \leq \max(0, (j-i)a_{j-1} C + K)< a_j$. Hence  $f(H_i)$ is not conjugate into $f(H_j)$.


The probability that the image is $A_m$ tends to $2^{-r}$ as $m$ goes to infinity (Theorem \ref{MainTheorem}). In particular, there exists a map $f$ with the described separating properties.
\end{proof}

\bibliographystyle{alpha}
\bibliography{../mybib}

\end{document}